\journal{}
\newtheorem{thm}[equation]{Theorem}
\newtheorem{prop}[equation]{Proposition}
\newtheorem{lem}[equation]{Lemma}
\DeclareMathOperator{\SL}{\textrm{SL}}
\DeclareMathOperator{\disc}{disc}
\DeclareMathOperator{\GL}{GL}
\DeclareMathOperator{\Hom}{Hom}
\DeclareMathOperator{\sgn}{sgn}
\newcommand{\C}{\mathbb C}
\newcommand{\HH}{\mathbb H}
\newcommand{\Q}{\mathbb Q}
\newcommand{\R}{\mathbb R}
\newcommand{\Z}{\mathbb Z}
\newcommand     {\abs}[1]       {{\left\lvert{#1}\right\rvert}}
\newcommand     {\kro}[2]       {{\left(\frac{#1}{#2}\right)}}
\newcommand{\smat}[1]{\left(\begin{smallmatrix}#1\end{smallmatrix}\right)}
\def\<#1>{\left\langle{#1}\right\rangle}
\renewcommand{\P}{\mathbb{P}}
\title{Computing Jacobi forms}
\author[N. Ryan, N. Sirolli, N.-P. Skoruppa, G. Tornar\'ia]
{Nathan C. Ryan,
Nicol\'as Sirolli,
Nils-Peter Skoruppa, and Gonzalo Tornar\'ia}
\begin{document}

\maketitle

\begin{abstract}
  We describe an implementation for computing holomorphic and
  skew-holomorphic Jacobi forms of integral weight and scalar index on
  the full modular group. This implementation is based on formulas
  derived by one of the authors which express Jacobi forms in terms of
  modular symbols of elliptic modular forms. Since this method allows
  to generate a Jacobi eigenform directly from a given modular
  eigensymbol without reference to the whole ambient space of Jacobi
  forms it makes it possible to compute Jacobi Hecke eigenforms of
  large index. We illustrate our method with several examples.
\end{abstract}
\section{Introduction}

Jacobi forms play a central role in the theory of automorphic
forms (e.g., via the Fourier Jacobi expansion of orthogonal modular
forms), in quantum field theory (where they appear as characters of
infinite dimensional Lie algebras) and in algebraic geometry (where
they provide an indispensable tool for the construction of functions
with prescribed behavior of their divisors). A philosophical reason for
this might be that any given space of scalar-valued or vector-valued
elliptic modular forms of integral or half-integral weight can be
naturally embedded into a space of Jacobi forms of integral
weight and lattice index on the full modular group~\cite{S10}.

In addition to their centrality and importance, they have the striking
property that there are various methods to compute their Fourier
expansions or even to describe them by explicit formulas.  The main
techniques to compute Jacobi forms are (1) theta blocks
\cite{GSZ}, (2) pullback of Jacobi forms of lattice index of singular
or critical weight, where the latter are essentially invariants of
Weil representations of~$\mathrm{SL}(2,\Z)$ \cite{Boylan}, (3)
the Taylor expansion of a Jacobi form around $z=0$ (see e.g.~\cite{S0})
and (4) modular symbols~\cite{S1,S3,S2}.  In this
paper we describe an implementation based on the latter.

Why do we focus on the modular symbol method?  Theta blocks work
nicely for small weights and produce appealing explicit formulas but
miss more and more Jacobi forms the larger the weight
becomes. Similarly, it is not yet clear in what generality the most
recent method of pulling back singular and critical weight Jacobi forms of lattice
index works. The Taylor expansion method always works and is easy to
implement but becomes computationally harder as the index and,
accordingly, the dimension of the spaces of Jacobi forms increase. In
contrast to this, the modular symbol method allows one to compute directly a
desired Jacobi eigenform without having to generate first a whole
space of Jacobi forms and then cut it down in a second step to the
eigenspace one is looking for. More precisely, we start with a modular
symbol representing an elliptic eigen-newform we are interested
in. The articles referenced above that are related to the modular symbol method 
propose closed formulas for the Jacobi newform associated to this elliptic 
modular form. This allows, for example, for an automatic generation of Tunnell 
like formulas for (the squares of) the central values of the L-function attached 
to the twist of a given elliptic curve over the rationals.

Whereas a ready-to-compute description of Jacobi forms in terms of
modular symbols is given in~\cite{S2} for weight $k=2$ (based on the
results in~\cite{S3}), the formulas in~\cite{S1}, which cover weights
$k\ge 3$, need some reformulation along the lines of~\cite{S2}. Since
the necessary steps for this are not completely obvious we found it
worthwhile to deduce and describe these steps in detail in this
note. The resulting formulas are summarized
in~Theorem~\ref{thm:formula}.

The paper is organized as follows.  We begin by discussing a couple of 
examples. After showing the kind of data the algorithm can produce, we give 
some background to Jacobi forms and then proceed to state and prove the 
mentioned formula we implemented.  We then highlight some details of 
our implementation and we conclude with an Appendix of various tables as proof 
of concept of our formulas and implementations.The implementation of the formula 
we derive is available at~\cite{jmf_webpage}.

\section{Examples}\label{sec:egs}

We start with some examples that we computed via an implementation of 
\eqref{eq:formula_implemented}. These two examples are two whose correctness can 
be checked independently as they have been computed elsewhere in the 
literature.

First, we construct the (unique up to normalization) holomorphic Jacobi 
cuspform $\phi_{2,37}$ of weight 2 and index 37. The coefficients in this case 
are indexed by pairs of integers $n,r$ so that $r^2 < 4\cdot 37 
\cdot n$.  The $n$ are the exponents of the  $q=e^{2\pi i \tau}$ ($\tau\in\HH$) 
and the $r$ are the exponents of the $\zeta=e^{2\pi i z}$ ($z\in\C$):
\begin{align*}
\phi_{2,37}(\tau,z)\, =
\rlap{$q$}\phantom{q^2}\,&\bigl({}
- 2\,(\zeta+\zeta^{-1})
+ 4\,(\zeta^2+\zeta^{-2})
+ 3\,(\zeta^{4}+\zeta^{-4})
\\ & \phantom{\bigl(}
+ 3\,(\zeta^{5}+\zeta^{-5})
- 4\,(\zeta^{6}+\zeta^{-6})
- 4\,(\zeta^{7}+\zeta^{-7})
\\ & \phantom{\bigl(}
-    (\zeta^{8}+\zeta^{8})
+ 6\,(\zeta^{9}+\zeta^{-9})
+ 3\,(\zeta^{11}+\zeta^{-11})
+ 3\,(\zeta^{12}+\zeta^{-12})
\bigr)\\[3pt]
{}+q^2\,&\bigl(\,
  4\,(\zeta+\zeta^{-1})
+    (\zeta^{2}+\zeta^{-2})
-    (\zeta^{3}+\zeta^{-3})
- 6\,(\zeta^{4}+\zeta^{-4})
\\ & \phantom{\bigl(}
-    (\zeta^{5}+\zeta^{-5})
+ 2\,(\zeta^{6}+\zeta^{-6})
- 4\,(\zeta^{7}+\zeta^{-7})
+ 6\,(\zeta^{8}+\zeta^{-8})
\\ & \phantom{\bigl(}
+    (\zeta^{11}+\zeta^{-11})
- 2\,(\zeta^{12}+\zeta^{-12})
+    (\zeta^{13}+\zeta^{-13})
\\ & \phantom{\bigl(}
- 3\,(\zeta^{14}+\zeta^{-14})
+    (\zeta^{15}+\zeta^{-15})
+ 2\,(\zeta^{16}+\zeta^{-16})
-    (\zeta^{17}+\zeta^{-17})
\bigr)\\
{}+\rlap{$\cdots$}\phantom{q^2}\,&
\end{align*}
More coefficients can be found in Table~\ref{tbl:comparison}.  These 
coefficients can be independently checked by comparing them to \cite[Table 4]{EZ}.

Second, we construct the holomorphic cuspform $\phi_{10,1}$ of weight 10 and index 1.  In 
this case the Fourier expansion is indexed by pairs of integers $n,r$ so that 
$r^2 < 4\cdot 1 \cdot n$, with the notation otherwise as above:
\begin{align*}
\phi_{10,1}(\tau,z)\, =
\rlap{$q$}\phantom{q^2}\,&\bigl({}
- 2+(\zeta+\zeta^{-1})
\bigr)\\
{}+q^2\,&\bigl(\,{}
36
-16\,(\zeta+\zeta^{-1})
- 2\,(\zeta^{2}+\zeta^{-2})
\bigr)\\
{}+q^3\,&\bigl(\,{}
- 272
+ 99\,(\zeta+\zeta^{-1})
+ 36\,(\zeta^{2}+\zeta^{-2})
+   (\zeta^{3}+\zeta^{-3})
\bigr)\\
{}+\rlap{$\cdots$}\phantom{q^2}\,&
\end{align*}
More coefficients can be found in Table~\ref{tbl:weight}.  These coefficients  
can be independently checked by comparing them to the first coefficient in the 
Fourier--Jacobi expansion of the Siegel modular cuspform of degree 2, weight 10 
and level 1 (see, e.g., 
\cite[\href{http://beta.lmfdb.org/ModularForm/GSp/Q/Sp4Z.10_Maass/}{Maass form 
of weight 10}]{lmfdb}).

\section{Jacobi forms}\label{sect:jmf}

The basic reference for holomorphic Jacobi forms is the book of Eichler and 
Zagier \cite{EZ}, whereas for skew-holomorphic Jacobi forms we refer the reader to 
\cite{S4}.
Denote by $J_{k,m}^-$ and $J_{k,m}^+$
the spaces of holomorphic and skew-holomorphic Jacobi forms of weight
$k$ and index $m$, respectively.
Thus $J_{k,m}^\varepsilon$, for $\varepsilon=\pm 1$
and integers $k\geq 2$ and $m\geq 1$,
is the space of smooth and periodic functions $\phi(\tau, z)$ 
with $\tau\in\HH$, $z\in\C$, having a Fourier expansion of the form
\begin{equation}\label{eq:fe}
\mbox{}\qquad\qquad
\phi(\tau,z) = \sum_{\substack{\Delta,r\in\Z,\;\varepsilon\Delta \geq 0 
\\ \Delta\equiv r^2\bmod{4m}}} c_\phi(\Delta,r)\,e^{2\pi i \left(
\tfrac{r^2-\Delta}{4m} u + \tfrac{r^2+\abs{\Delta}}{4m}iv+rz\right)}
\qquad(\tau=u+iv),
\end{equation}
where the coefficients $c_\phi(\Delta,r)$ depend on $r$ only modulo $2m$ (see \cite[Thm.~2.2]{EZ}),
and such that
\[
\phi\left(\frac{-1}{\tau},\frac{z}{\tau}\right)e^{-2\pi i m \tfrac{z^2}{\tau}}=\phi(\tau,z)\cdot\begin{cases}
\tau^k & \text{ if } \varepsilon = -1,\\
\bar{\tau}^{k-1}\abs{\tau} & \text{ if }\varepsilon = + 1.\end{cases}
\]
This transformation formula, used twice, implies that
$c_\phi(\Delta,-r)=(-1)^{k-1}\,\varepsilon\,c_\phi(\Delta,r)$.
Also note that there is a skew-linear involution $\jmath:J^\varepsilon_{k,m} \to
J^\varepsilon_{k,m}$, given by $\phi(\tau,z)\mapsto 
\overline{\phi(-\overline{\tau},-\overline{z})}$,
which satisfies 
$c_{\jmath \phi}(\Delta,r) = \overline{c_\phi(\Delta,r)}$.

As mentioned above, the Fourier coefficients are indexed by pairs of
integers $(\Delta,r\bmod 2m)$ with $r^2 \equiv \Delta \pmod{4m}$ and
$\Delta\leq 0$ if $\phi$ is holomorphic and $\Delta\geq 0$ for
skew-holomorphic $\phi$.
We remark that, for holomorphic Jacobi forms like $\phi_{2,37}$ and
$\phi_{10,1}$ in Section~\ref{sec:egs}, we have
$\tfrac{r^2-\Delta}{4m} = \tfrac{r^2 + \abs{\Delta}}{4m}$ and so
an alternative is to use Fourier coefficients
$a_\phi(n,r)=c_\phi(r^2-4mn,r)$ indexed by
pairs of integers $n$, $r$ with $r^2-4mn\leq 0$ and obtain a Fourier
expansion of the form
\[
    \qquad\qquad\qquad\qquad
    \phi(\tau,z)=\sum_{n=0}^\infty q^n
    \left( \sum_{r^2\leq 4mn} a_\phi(n,r) \,\zeta^r \right)
    \qquad\qquad
    \left({q=e^{2\pi i\tau}\!,\, \zeta=e^{2\pi i z}}\right)
\]
as illustrated by those two examples.
However, in what follows it will be more convenient
to index the coefficients as in \eqref{eq:fe}, which works for both
holomorphic as well as skew-holomorphic Jacobi forms.

A form in $J^\varepsilon_{k,m}$ is cuspidal if $c(0,r)=0$ for every $r$.
For $k=2$ and $\varepsilon=+1$ there are certain
\emph{trivial cusp forms}. These are those Jacobi cusp forms
for which $c(\Delta,r)$ is non-zero at most if $\Delta$ is a perfect
square
(see the definition of $T_{r_0}$ in \cite[p. 514]{S3}). Their Fourier 
coefficients are trivial to compute.

We denote by $S^\varepsilon_{k,m}$ the subspace of $J^\varepsilon_{k,m}$
consisting of the cuspidal forms
\emph{which are orthogonal to the trivial cusp forms}.
Let $S^\varepsilon_{2k-2}(m)$ denote the space of classical holomorphic modular 
cuspforms of weight $2k-2$ for $\Gamma_0(m)$ whose L-functions have functional 
equation with sign $\varepsilon$. The following result was proved in 
\cite[Theorem 5]{SZ} when $\varepsilon = -1$. For the case $\varepsilon = 1$ it was announced
in \cite[Main Theorem]{S4}; its proof will be given in~\cite{BSZ}.

\begin{thm}\label{thm:lift}
Assume $k\ge 2$. For any fixed fundamental discriminant $\Delta_0$ and any fixed
integer $r_0$ such that $\Delta_0\equiv r_0^2\pmod{4m}$
and $\sgn\Delta_0=\varepsilon$, there is a Hecke equivariant map
\[
    \mathcal{S}_{\Delta_0,r_0} :
    S^\varepsilon_{k,m} \to S^\varepsilon_{2k-2}(m)
\]
given by
\[
\mbox{}\qquad\qquad
 \mathcal{S}_{\Delta_0,r_0}(\phi)
 = \sum_{n\geq 1} \left(
     \sum_{d\mid n} \kro{\Delta_0}{d}
     c_\phi\left(\frac{n^2}{d^2}\Delta_0,\frac{n}{d}r_0\right)
 \right) q^n \qquad (q = e^{2\pi i \tau}).
\]
Some linear combination of these maps is injective, and
its image comprises all newforms in $S^\varepsilon_{2k-2}(m)$.
Furthermore, $\mathcal{S}_{\Delta_0,r_0}$ sends
newforms to newforms.
\end{thm}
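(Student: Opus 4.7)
For the holomorphic case $\varepsilon=-1$ the statement is \cite[Theorem~5]{SZ}, so my plan focuses on the skew-holomorphic case $\varepsilon=+1$, modelled closely on that proof. The strategy has three main steps: establishing modularity of the image, verifying Hecke equivariance, and proving a Waldspurger/Kohnen-type identity that controls the image.

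First, I would establish modularity via the theta decomposition. Any $\phi\in J^\varepsilon_{k,m}$ decomposes as $\phi(\tau,z)=\sum_{\mu\bmod 2m} h_\mu(\tau)\,\vartheta_{m,\mu}(\tau,z)$, where the vector $(h_\mu)_\mu$ transforms as a (skew-)holomorphic vector-valued modular form of half-integral weight $k-\tfrac12$ for the appropriate Weil representation of $\SL(2,\Z)$, and its Fourier coefficients at discriminant $\Delta$ are exactly the $c_\phi(\Delta,\mu)$. The sum defining $\mathcal{S}_{\Delta_0,r_0}(\phi)$ is then a Shimura-type contraction of $(h_\mu)_\mu$ against the discriminant $\Delta_0$ and the character $\kro{\Delta_0}{\cdot}$. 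A direct adaptation of the arguments in \cite{SZ}, tracking carefully the complex conjugates coming from the skew-holomorphic transformation law, shows that this contraction lands in $S_{2k-2}(\Gamma_0(m))$ and that the sign of its functional equation is $\varepsilon$; the latter can be read off from how the Fricke involution $W_m$ acts on the theta components.

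Next, I would verify Hecke equivariance by a direct Fourier coefficient computation. The Hecke action on $J^\varepsilon_{k,m}$ at a prime $p\nmid m$ mixes the $c_\phi(\Delta,r)$ in a way (see the Hecke formulas in \cite{EZ,S4}) that, once composed with the summation defining $\mathcal{S}_{\Delta_0,r_0}$ and the multiplicativity of $\kro{\Delta_0}{\cdot}$, rearranges into the standard Hecke relation satisfied by a weight $2k-2$ eigenform. For primes $p\mid m$, I would check the analogous identity using the appropriate $U_p$ operator.

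The main obstacle is the injectivity of some linear combination and the identification of the image with all newforms. Here I would invoke a Waldspurger/Kohnen-type identity expressing $|c_\phi(\Delta_0,r_0)|^2$, for $\phi$ a Hecke eigenform, as a multiple of the central twisted L-value $L(\mathcal{S}_{\Delta_0,r_0}(\phi)\otimes\kro{\Delta_0}{\cdot},k-1)$. The non-simultaneous vanishing of these values across admissible $(\Delta_0,r_0)$ (using that fundamental discriminants are Zariski-dense in the relevant sense) forces a finite linear combination of the $\mathcal{S}_{\Delta_0,r_0}$ to be injective. Identifying the image with precisely the newforms in $S^\varepsilon_{2k-2}(m)$ requires a newform theory for vector-valued modular forms attached to the Weil representation of $\SL(2,\Z)$; the subspace $S^\varepsilon_{k,m}$, obtained by removing the trivial cusp forms in the exceptional case $k=2$, $\varepsilon=+1$, corresponds exactly to the newform part under this dictionary. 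Working out the Waldspurger formula and the newform theory in the skew-holomorphic setting is precisely what \cite{BSZ} is expected to supply; the remaining analytic subtlety, and therefore the hardest part, is that the kernel function producing the lift is only real-analytic rather than holomorphic, so its cohomological interpretation requires extra care.
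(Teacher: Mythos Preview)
The paper does not actually prove this theorem: immediately before the statement it says that the case $\varepsilon=-1$ is \cite[Theorem~5]{SZ} and that the case $\varepsilon=+1$ was announced in \cite[Main Theorem]{S4} with full proof deferred to the forthcoming~\cite{BSZ}. There is therefore no proof in the paper to compare your proposal against; the theorem is quoted as background.

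That said, your sketch is a reasonable outline of how the cited proofs proceed, and you correctly identify that the substantive work in the skew-holomorphic case (the Waldspurger-type identity and the newform theory for the relevant vector-valued forms) is exactly what is delegated to~\cite{BSZ}. One small correction: the injectivity statement in the theorem is not quite the Waldspurger mechanism you describe. What is actually needed is that for a nonzero eigenform $\phi$ there exists \emph{some} admissible $(\Delta_0,r_0)$ with $c_\phi(\Delta_0,r_0)\neq 0$, which is immediate from the Fourier expansion, and then $\mathcal{S}_{\Delta_0,r_0}(\phi)\neq 0$ because its first Fourier coefficient is $c_\phi(\Delta_0,r_0)$; the deeper input is rather that the image lands in the \emph{certain space} of~\cite{SZ} and hits all newforms, which does require the structural results you allude to. Also, your phrase ``fundamental discriminants are Zariski-dense'' is not the right heuristic here and is not needed.
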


We remark that the sum of the images of the maps
$\mathcal{S}_{\Delta_0,r_0}$ can be explicitly described. It consists
of the {\em certain space} introduced in~\cite{SZ}. Moreover, the
theorem remains valid also for Eisenstein series $\phi$ (with a
suitable definition of the constant term of
$\mathcal{S}_{\Delta_0,r_0}(\phi)$).

\section{Formulas}\label{sect:formulas}

In \cite{S2}, a formula is given that takes as input a cuspidal modular symbol 
$\sigma$ of weight $2$, a fundamental discriminant $\Delta_0$ and a square root $r_0$ of 
$\Delta_0$ mod $4m$, and produces the $(\Delta,r)$-th coefficient of a Jacobi 
form of weight $2$ and index $m$ associated to $\sigma$ for every 
discriminant $\Delta$ such that $\Delta$ and $\Delta \Delta_0$ are not squares. 
The formula is given as a sum of terms involving the so-called intersection 
numbers of two geodesics, one connecting the roots of indefinite quadratic 
forms of discriminant $\Delta\Delta_0$ and one induced by $\sigma$. 

In this section we state and prove an extension of the formula in \cite{S2} 
to weight $k\geq3$. We also make it more suitable for computation. We start by 
fixing some notation and defining the intersection number.

\subsection{A pairing for polynomials}  Given a non-negative integer $w$,  let 
$\GL(2,\C)$ act on the space $\C[X,Y]_w$ of homogeneous polynomials of degree $w$ 
by
\[
(A\cdot P)(X,Y) = P(A^{-1}\left(\begin{smallmatrix}
  X\\Y\end{smallmatrix}\right)).
\]
We let $P(\alpha)=P(\alpha,1)$ for $\alpha\in\C$ and
$P(\infty)=P(1,0)$. Note that $(A\cdot P)(A\alpha)=P(\alpha)$.

Given $P_1 = \sum_{l=0}^{w} a_l X^l Y^{w-l}$ and $P_2 = \sum_{l=0}^{w} b_l X^l 
Y^{w-l}$ polynomials in $\C[X,Y]_{w}$, let
\[
 [P_1 \mid P_2] =
 \sum_{l=0}^{w} (-1)^ l \,\binom{w}{l}^{\!\!-1} \!a_l \,b_{w-l}
 \,.
\]

\begin{prop}\label{prop:poly_pairing}
 The bilinear pairing $[\cdot \mid \cdot]$ satisfies the following properties.
 \begin{enumerate}
  \item $[(xY-X)^{w}\mid P] = P(x)$ for every $P\in\C[X,Y]_{w}$ and $x\in\C$.
  \item $[P_1 \mid P_2] = (-1)^w\,[P_2 \mid P_1]$.
  \item $[A\cdot P_1 \mid A\cdot P_2] = \det(A)^{-w}\,[P_1 \mid P_2]$ 
for every $P_1, P_2\in\C[X,Y]_{w}$ and $A\in \GL(2,\C)$.
 \end{enumerate}
\end{prop}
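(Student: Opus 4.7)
The three parts are of increasing depth, and the plan is to handle them in order, with (3) reducing to (1) via a spanning argument.

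For (1), I would expand $(xY-X)^w$ by the binomial theorem to read off its coefficient of $X^l Y^{w-l}$ as $(-1)^l\binom{w}{l}x^{w-l}$. Substituting into the definition of $[\,\cdot\mid\cdot\,]$, the factors $(-1)^l\binom{w}{l}$ and $\binom{w}{l}^{-1}$ cancel and the signs all become $+1$, leaving $\sum_l x^{w-l}b_{w-l}=P(x)$ after the reindexing $l\mapsto w-l$. For (2), the substitution $l\mapsto w-l$ in the defining sum together with $\binom{w}{w-l}=\binom{w}{l}$ immediately exhibits the symmetry up to the overall sign $(-1)^w$.

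The key step is (3). The plan is to use (1) to reduce to the case $P_1=(xY-X)^w$: as $x$ varies over $\C$, these polynomials span $\C[X,Y]_w$ (a Vandermonde argument on $w+1$ distinct values of $x$ suffices), and both sides of (3) are linear in each of $P_1$ and $P_2$. Writing $A=\smat{a&b\\c&d}$ and $A^{-1}=(\det A)^{-1}\smat{d&-b\\-c&a}$, a direct substitution shows
\[
 A\cdot(xY-X)^w = (\det A)^{-w}\bigl((ax+b)Y-(cx+d)X\bigr)^w = \frac{(cx+d)^w}{(\det A)^w}\,(yY-X)^w,
\]
where $y=(ax+b)/(cx+d)$. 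Applying (1) to the outer factor gives $[A\cdot P_1\mid A\cdot P_2]=(cx+d)^w(\det A)^{-w}(A\cdot P_2)(y)$. To conclude, I would invoke the excerpt's identity $(A\cdot P_2)(A\alpha)=P_2(\alpha)$, read with $A\alpha$ as the vector $(ax+b,cx+d)$, together with the degree-$w$ homogeneity of $A\cdot P_2$, to obtain $(A\cdot P_2)(y)=(cx+d)^{-w}P_2(x)$. The factors $(cx+d)^{\pm w}$ then cancel, yielding $\det(A)^{-w}P_2(x)=\det(A)^{-w}[P_1\mid P_2]$.

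The main obstacle is purely clerical: carefully tracking the homogeneity factor $(cx+d)^w$ when relating the vector-evaluation of $A\cdot P_2$ at $(ax+b,cx+d)$ to its affine evaluation at $y$. Conceptually, property (1) exhibits $(xY-X)^w$ as a reproducing kernel for evaluation at $x$, so (3) amounts to the $\GL(2,\C)$-equivariance of this evaluation; the degenerate locus $cx+d=0$ is handled either by continuity in $x$ or by choosing a different element of the spanning family.
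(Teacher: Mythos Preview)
Your argument is correct but follows a genuinely different route from the paper's proof of part~(3). The paper proceeds by reducing to generators of $\GL(2,\C)$: it checks the identity for diagonal matrices and the flip $\smat{0&1\\1&0}$ (declared easy) and then carries out an explicit binomial computation for unipotent matrices $\smat{1&b\\0&1}$ applied to monomial basis elements $P_1=X^sY^{w-s}$, $P_2=X^tY^{w-t}$, collapsing the resulting sum via $\sum_i(-1)^i\binom{n}{i}=\delta_{n,0}$. Your approach instead exploits part~(1) as a reproducing-kernel property: since the polynomials $(xY-X)^w$ span $\C[X,Y]_w$, it suffices to verify~(3) for $P_1=(xY-X)^w$, and then both sides reduce to evaluation of $P_2$, with the $\GL(2,\C)$-equivariance of evaluation doing all the work.

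The paper's method is more elementary and self-contained---it never invokes~(1) and establishes~(3) by a direct combinatorial identity---at the cost of a somewhat opaque sum manipulation. Your method is shorter and more conceptual once~(1) is in hand, revealing~(3) as essentially the statement that evaluation at a point is $\GL(2,\C)$-equivariant; the trade-off is the need to track the homogeneity factor $(cx+d)^w$ and to dispose of the degenerate locus $cx+d=0$, both of which you handle correctly.
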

\begin{proof}
 The first and second assertions are clear. To prove the third assertion, it 
 suffices to consider matrices $A$ of the form $\smat{a&0\\0&a'}$,
 $\smat{0&1\\1&0}$ and $\smat{1&b\\0&1}$. The first two cases are
 easy, so let $A=\smat{1&b\\0&1}$, with $b\in\C$,
 and assume $P_1=X^s Y^{w-s}$ and $P_2=X^t Y^{w-t}$.
 Then
 \begin{align*}
  [A\cdot P_1 \mid A\cdot P_2]
  & = [(X-bY)^s\,Y^{w-s} \mid (X-bY)^t\,Y^{w-t}]
  \\ & = \sum_{l=w-t}^{s} (-1)^l \binom{w}{l}^{\!\!-1}
  \binom{s}{l}\,(-b)^{s-l}  \,    
  \binom{t}{w-l}\,(-b)^{t-w+l}    
  \\ & = (-1)^{s}\,\frac{s!\,t!}{w!}
  \,\frac{b^{s+t-w}}{(s+t-w)!}
  \sum_{l=w-t}^{s} (-1)^{t-w+l} \binom{s+t-w}{t-w+l}
  \\ & = (-1)^{s}\,\frac{s!\,t!}{w!}
  \,\frac{b^{s+t-w}}{(s+t-w)!}
  \sum_{i=0}^{s+t-w} (-1)^{i} \binom{s+t-w}{i}
  \\ & = \begin{cases}
  (-1)^s\,\binom{w}{s}^{\!-1} & \text{if $s+t-w=0$,} \\
  0 & \text{otherwise.}
  \end{cases}
  \\ & = [P_1\mid P_2] \,.
 \end{align*}
 \end{proof}

\subsection{Modular symbols and the intersection number}

Let $k\geq2$ be an integer. Following \cite{St}, we denote by $\mathbb{M}_2$ the 
free abelian group generated by symbols $\{\alpha,\beta\}$ with 
$\alpha,\beta\in\P^1(\Q)$, modulo the relations
\[
 \{\alpha, \beta\} + \{\beta, \gamma\} + \{\gamma, \alpha\} = 0,
\]
and modulo any torsion. We let $\mathbb{M}_{2k-2} = \mathbb{M}_2\otimes 
\Z[X,Y]_{2k-4}$. Then $\mathbb{M}_{2k-2}$ is a $\GL(2,\Z)$-module, via
\[
 A\cdot(\{\alpha,\beta\}\otimes P) = \{A\alpha,A\beta\}\otimes A\cdot P.
\]
Let $m$ be a non-negative integer. We let $\mathbb{M}_{2k-2}(m)$ denote the 
space of modular symbols of weight $2k-2$ and level $m$, i.e. the quotient of 
$\Gamma_0(m)$-coinvariants of $\mathbb{M}_{2k-2}$. It has finite rank. 
Furthermore, $\mathbb{M}_{2k-2}(m)$ comes equipped with the action of Hecke 
operators. We denote the subspace of \emph{new modular symbols} by 
$\mathbb{M}^{\text{new}}_{2k-2}(m)$.

Given $P\in\Z[X,Y]_{2k-4}$ 
and $A\in \GL(2,\Z)$, let $[P,A]$ denote the \emph{Manin symbol}
\[
 [P,A] = A \cdot (\{0,\infty\}\otimes P) \quad \in \mathbb{M}_{2k-2}(m).
\]
By \cite[Proposition 8.3]{St} these symbols span $\mathbb{M}_{2k-2}(m)$. This 
implies that every modular cuspform $f\in S_{2k-2}(m)$ induces a Hecke 
equivariant map
\begin{align*}
 I_f :\mathbb{M}_{2k-2}(m) & \longrightarrow \C \\
 [P,A] & \mapsto \int_{0}^{i\infty} (f\vert_{2k-2}[A])(t)\,P(t,1)\,dt.
\end{align*}

Let $\Z[\P^1(\Q)]$ denote the free abelian group generated by symbols 
$(\alpha)$ with $\alpha \in \P^1(\Q)$, and let
$\mathbb{B}_{2k-2} = \Z[\P^1(\Q)]\otimes \Z[X,Y]_{2k-4}$. Then 
$\mathbb{B}_{2k-2}$ is a $\GL(2,\Z)$-module via
\[
A\cdot((\alpha)\otimes P) = (A\alpha)\otimes A\cdot P.
\]
We define $\mathbb{B}_{2k-2}(m)$ to be the quotient of 
$\Gamma_0(m)$-coinvariants of $\mathbb{B}_{2k-2}$. We have a map of 
$\GL(2,\Z)$-modules $\partial : \mathbb{M}_{2k-2}(m)\to \mathbb{B}_{2k-2}(m)$ 
induced by
\begin{equation*}
 \{\alpha,\beta\}\otimes P \mapsto ((\beta)-(\alpha)) \otimes P.
\end{equation*}
We let $\mathbb{S}_{2k-2}(m) = \ker \partial$, and we let 
$\mathbb{S}^{\text{new}}_{2k-2}(m) = \mathbb{S}_{2k-2}(m) \cap  
\mathbb{M}^{\text{new}}_{2k-2}(m)$.

For $\varepsilon = \pm 1$ we denote by $\mathbb{M}^\varepsilon_{2k-2}(m)$ 
the subspace where $g=\left(\begin{smallmatrix} -1 & 0 \\ 0 & 1 
\end{smallmatrix}\right)$ acts as multiplication by $(-1)^{k-1}\varepsilon$.
Given $\sigma \in \mathbb{M}_{2k-2}(m)$ we denote
\[
 \sigma^\varepsilon = \sigma + (-1)^{k-1} \, \varepsilon \, (g \cdot \sigma) 
\quad \in \mathbb{M}^\varepsilon_{2k-2}(m).
\]
Finally, we define 
$\mathbb{S}^\varepsilon_{2k-2}(m) = \mathbb{S}_{2k-2}(m) \cap 
\mathbb{M}^\varepsilon_{2k-2}(m)$.

Let $Q\in\Z[X,Y]_2$ be a binary quadratic form with integral coefficients.
We define an 
\emph{intersection number} map 
$C_Q : \mathbb{M}_{2k-2} \to \Q$ by
\begin{equation}\label{eq:int_number_def}
 C_Q \cdot \{\alpha, \beta\} \otimes P = \frac12 \bigl(\sgn Q(\alpha) - 
\sgn Q(\beta)\bigr) \, [P \mid Q^{k-2}].
\end{equation}
By Proposition \ref{prop:poly_pairing}, we have that $C_{A\cdot Q} \cdot (A 
\cdot \sigma) = C_Q \cdot \sigma$ for every $A \in \GL(2,\Z)$ and every $\sigma 
\in \mathbb{M}_{2k-2}$.

\begin{rmk}
 Assume that $Q$ has positive discriminant, and write $Q = (uX + vY)(wX + tY)$ 
with $u,v,w,t \in \R$. We associate to~$Q$ the {\em Heegner cycle}
\[
 C_Q = \sgn(ut-vw) \{-v/u,-t/w\} \otimes Q^{k-2}.
\]
Then $C_Q \cdot \sigma$ can be interpreted as an intersection number on 
$\C[\mathfrak{L}]\otimes \Z[X,Y]_{2k-4}$, where $\mathfrak{L}$ is the set of 
oriented hyperbolic lines in the Poincar\'e uper half plane, and where a symbol 
$\{\alpha, \beta\}$ is identified with the hyperbolic line with $\alpha$ as 
``starting point'' and $\beta$ as ``end point''.  
Furthermore, when $k=2$ the intersection number $C_Q \cdot C_R$ agrees 
with the one introduced in \cite{S2}.
\end{rmk}

\subsection{Formulas for Fourier coefficients of Jacobi forms}

For the rest of this section we assume that $(\Delta_0,r_0)$ is a fixed 
\emph{$m$-admissible pair}, i.e. $\Delta_0$ is a fundamental discriminant, and 
$r_0$ is an integer such that $\Delta_0 \equiv r_0^2\pmod{4m}$.
Furthermore, we assume that $\sgn(\Delta_0) = \varepsilon$.

The key idea of~\cite{S3} and~\cite{S1} for obtaining explicit formulas for 
Jacobi forms in terms of modular symbols is to consider the Hecke equivariant 
map
\begin{align*}
\Sigma_{\Delta_0,r_0}: S^{\varepsilon}_{k,m} 
\xrightarrow{\mathcal{S}_{\Delta_0,r_0}}
S^{\varepsilon}_{2k-2}(m)& \longrightarrow
\Hom(\mathbb{M}_{2k-2}(m)\otimes \C,\C)
\\
f&\mapsto I_f\nonumber
\end{align*}
and to dualize it. Identifying $S_{k,m}$ with its dual space via the map $\phi 
\mapsto \left\langle \cdot,\jmath \phi \right\rangle$, we get a Hecke 
equivariant map $\Sigma^*_{\Delta_0,r_0} : \mathbb{M}_{2k-2}(m)\otimes \C \to 
S^{\varepsilon}_{k,m}$ that satisfies
\begin{equation}\label{eq:sigma*}
 \left\langle \phi, \jmath\Sigma^*_{\Delta_0,r_0}(\sigma) 
\right\rangle = \Sigma_{\Delta_0,r_0}(\phi)(\sigma^\varepsilon)
\end{equation}
for every $\sigma \in \mathbb{M}_{2k-2}(m)$ and every 
$\phi \in S^{\varepsilon}_{k,m} $. By Theorem \ref{thm:lift}, since the map 
which associates to a modular form its periods is injective, 
some linear combination of the maps $\Sigma^*_{\Delta_0,r_0}$ is surjective.
Furthermore,
every newform in $S^\varepsilon_{k,m}$ can be obtained from some
$\sigma\in\mathbb{S}^{\varepsilon,\text{new}}_{2k-2}$.

From here on assume that $k\geq 3$, and let
\[
 b_{k,m} = 
\frac{2}{\sqrt{\varepsilon}}\left(\frac{2\varepsilon}{mi}\right)^{k-2},
\]
where $i=\sqrt{-1}$.

We denote $\mathcal{Q}_m = \{[ma,b,c]\;:\; a,b,c \in \Z\}$,
where $[a,b,c]$ represents the binary quadratic form
$aX^2+bXY+cY^2\in\Z[X,Y]_2$.
Denote by $\chi_{m,\Delta_0}:\mathcal{Q}_m \to \{0,1,-1\}$ the genus character 
introduced in \cite[Proposition 1]{GKZ}. Given integers $\Delta,r$, we let
\[
 \mathcal{Q}_m(\Delta,r) = \{[ma,b,c] \in \mathcal{Q}_m\;:\; b^2-4mac = 
\Delta,\;b\equiv r \mod 2m\}.
\]
Note that if $\Delta \neq \square$ and $[ma,b,c] \in \mathcal{Q}_m(\Delta,r)$ 
then $ac \neq 0$.

With this notation in mind, given $A \in \SL(2,\Z)$ we let 
$\mathcal{L}^A_{\Delta_0,r_0} : \mathbb{H}\times\C \to \C[X,Y]_{2k-4}$ be the 
kernel map defined in \cite{S1}. For fixed $x\in\C$ the function 
$\mathcal{L}^A_{\Delta_0,r_0}(\cdot)(x) $ belongs to $S^\varepsilon_{k,m}$, and 
its $(\Delta,r)$-th Fourier coefficient
is given by $b_{k,m} \, \mathcal{C}_{\Delta_0, 
r_0}^A(\Delta,r)(x)$, where $\mathcal{C}_{\Delta_0, 
r_0}^A(\Delta,r)\in\R[X,Y]_{2k-4}$ is given by
\begin{align}
 \mathcal{C}_{\Delta_0, r_0}^A(\Delta,r)(x)\; & =  
\sum_{\substack{Q\in\mathcal{Q}_m(\Delta\Delta_0,r r_0) \\ A^{-1}\cdot Q = 
[a,b,c], \,ac<0 }} \chi_{m,\Delta_0}(Q)\,\sgn(a)\,(A^{-1}\cdot Q)(x)^{k-2} 
\nonumber\\[4pt]
  & + \sum_{\substack{Q\in\mathcal{Q}_m(\Delta\Delta_0,r r_0) \\ A^{-1}\cdot Q 
= [0,b,c], \,0\leq c < N }} F_Q(x) 
    - \sum_{\substack{Q\in\mathcal{Q}_m(\Delta\Delta_0,r r_0) \\ A^{-1}\cdot Q 
= [a,b,0], \,0\leq a < N }} G_Q(x)
\label{eq:L_coeffs}\\[4pt]
& + \mathcal{Z}^A_{\Delta_0,r_0}(\Delta,r)(x) \nonumber\,.
\end{align}
Here $N$ is certain non-negative integer; $F_Q(X),G_Q(X)$ are certain 
polynomials over $\Q$ of degree at most $k-1$ (they have explicit descriptions 
which we do not need). Furthermore, $\mathcal{Z}^A_{\Delta_0,r_0}(\Delta,r)(x)$ 
is a correction term which we describe below.

The following result is proved in \cite[Proposition 4]{S1}.

\begin{prop}
The map $\mathcal{L}^A_{\Delta_0,r_0}$ is a kernel map, in the sense that it 
satisfies 
\begin{align}
 \left\langle \phi, \mathcal{L}^A_{\Delta_0,r_0} 
(\cdot)(\overline{x})\right\rangle
& = \Sigma_{\Delta_0,r_0}(\phi)([P_x,A]^\varepsilon)
\label{eq:L_and_Sigma}
\end{align}
for every $\phi\in S^\varepsilon_{k,m}$ and $x\in\C$,
where $P_x = (xY-X)^{2k-4}\in\C[X,Y]_{2k-4}$.
\end{prop}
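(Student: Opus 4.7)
The plan is to prove the kernel identity by unfolding both sides to the level of Fourier coefficients and matching them via a genus-character decomposition. Since the inner product on $S^\varepsilon_{k,m}$ is non-degenerate and both sides are $\C$-linear functionals in $\phi$, it suffices to verify the identity on a basis of Hecke eigenforms, each of which corresponds via Theorem~\ref{thm:lift} to an eigenform $f=\mathcal{S}_{\Delta_0,r_0}(\phi)\in S^\varepsilon_{2k-2}(m)$.

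First I would unfold the right-hand side. By the definition of $\Sigma_{\Delta_0,r_0}$ we have $\Sigma_{\Delta_0,r_0}(\phi)([P_x,A]^\varepsilon)=I_f([P_x,A]^\varepsilon)$, which expands as
\[
 \int_0^{i\infty}\!(f\vert_{2k-2}[A])(t)\,P_x(t,1)\,dt \;+\; (-1)^{k-1}\varepsilon\!\int_0^{i\infty}\!(f\vert_{2k-2}[gA])(t)\,P_x(t,1)\,dt.
\]
Substituting the series for $f$ from Theorem~\ref{thm:lift} rewrites this as a double sum over $n\ge 1$ and $d\mid n$ of period integrals of $q^n$-monomials against $P_x(t,1)\,dt$, weighted by $\kro{\Delta_0}{d}\,c_\phi\bigl(\tfrac{n^2}{d^2}\Delta_0,\tfrac{n}{d}r_0\bigr)$.

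Next I would unfold the left-hand side using the Petersson pairing on $J^\varepsilon_{k,m}$. Expanding $\mathcal{L}^A_{\Delta_0,r_0}(\cdot)(\bar x)$ via \eqref{eq:L_coeffs} and invoking Parseval extracts a sum over $(\Delta,r)$ of $c_\phi(\Delta,r)\cdot b_{k,m}\,\mathcal{C}^A_{\Delta_0,r_0}(\Delta,r)(x)$, up to standard normalization. The algebraic bridge is Proposition~\ref{prop:poly_pairing}(1), namely $[P_x\mid Q^{k-2}]=Q(x)^{k-2}$: this lets us read each term $(A^{-1}\!\cdot Q)(x)^{k-2}$ appearing in \eqref{eq:L_coeffs} as the intersection pairing $[P_x\mid (A^{-1}\!\cdot Q)^{k-2}]$, which, by the $\GL(2,\Z)$-equivariance in Proposition~\ref{prop:poly_pairing}(3), is the value of the Heegner-cycle integral $C_Q$ against $A\cdot(\{0,\infty\}\otimes P_x)=[P_x,A]$.

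The heart of the argument is then the matching of interior contributions through the classical genus-character decomposition: summing $\chi_{m,\Delta_0}(Q)$ over $Q\in\mathcal{Q}_m(\Delta\Delta_0,rr_0)$ with $\Delta$, $r$ varying is equivalent to summing $\kro{\Delta_0}{d}$ over divisors $d\mid n$ in the Shimura--Skoruppa--Zagier lift. This identifies the non-degenerate ($ac<0$) part of \eqref{eq:L_coeffs}, paired against $c_\phi(\Delta,r)$, with the period integrals of $f$ produced on the right-hand side. The main obstacle will be the degenerate cases $A^{-1}\!\cdot Q=[0,b,c]$ or $[a,b,0]$, where the geodesic associated to $Q$ meets the cusps $A\cdot 0$ or $A\cdot i\infty$ and ordinary unfolding fails. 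The polynomials $F_Q$, $G_Q$ and the correction term $\mathcal{Z}^A_{\Delta_0,r_0}(\Delta,r)(x)$ are designed in \cite{S1} precisely to regularize these boundary contributions; verifying that the explicit $F_Q$, $G_Q$ and $\mathcal{Z}^A_{\Delta_0,r_0}$ cancel the cusp boundary terms of the period unfolding along $A\cdot\{0,i\infty\}$ is the technical crux of the proposition, and is where the argument of \cite[Prop.~4]{S1} does its real work.
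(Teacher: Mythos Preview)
The paper does not prove this proposition at all: it is stated with the attribution ``The following result is proved in \cite[Proposition 4]{S1}'' and is imported as a black box from the literature. So there is no proof in the paper to compare your proposal against.

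Your sketch is a reasonable outline of the kind of argument one expects in \cite{S1}: unfold the period integral $I_f([P_x,A]^\varepsilon)$ on one side, unfold the Petersson pairing against the explicit Fourier expansion \eqref{eq:L_coeffs} on the other, and match via the genus-character identity that underlies the Shimura--Skoruppa--Zagier lift. You correctly identify that the serious work lies in the degenerate terms (where $A^{-1}\cdot Q$ has a zero outer coefficient) and in the boundary correction $\mathcal{Z}^A_{\Delta_0,r_0}$; indeed, the construction of $F_Q$, $G_Q$, and the Dirichlet-series correction in \cite{S1} is exactly this regularization. One caution: your opening reduction to Hecke eigenforms via Theorem~\ref{thm:lift} is unnecessary and slightly circular in spirit, since the kernel identity is what one uses to \emph{establish} the explicit inverse to the lift; the identity \eqref{eq:L_and_Sigma} should be (and in \cite{S1} is) proved directly for arbitrary $\phi$ by the unfolding you describe, without first passing to eigenforms.
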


\begin{rmk}
 In the statement of \cite[Proposition 4]{S1} there is a tiny mistake: in the 
left hand side of \eqref{eq:L_and_Sigma} the kernel map 
$\mathcal{L}^A_{\Delta_0,r_0}$ appears evaluated in $-\overline{x}$, but it 
should be $\overline{x}$.
 
\end{rmk}

The following lemma
relates $\Sigma^*_{\Delta_0,r_0}$ to the kernel map introduced above.

\begin{lem}\label{lem:sigma*_vs_kernel}
For every $P\in\Z[X,Y]_{2k-4}$ and $A\in \SL(2,\Z)$, we have that
 \begin{equation}\label{eq:L_kernel}
  \Sigma^*_{\Delta_0,r_0} ([P,A])(\tau,z) 
= \overline{b_{k,m}}/\!\lower4pt\hbox{$b_{k,m}$} \,
    [P \mid \mathcal{L}^A_{\Delta_0,r_0} (\tau,z)].
 \end{equation}
\end{lem}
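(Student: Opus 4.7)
The plan is to combine the defining property \eqref{eq:sigma*} of $\Sigma^*_{\Delta_0,r_0}$ with the kernel identity \eqref{eq:L_and_Sigma}, after reducing to the special polynomials $P_x=(xY-X)^{2k-4}$. Both sides of \eqref{eq:L_kernel} are linear in $P$, and a Vandermonde argument shows that $\{P_x:x\in\C\}$ spans $\C[X,Y]_{2k-4}$, so it suffices to establish the identity for $P=P_x$. For such $P$, part~(1) of Proposition~\ref{prop:poly_pairing} simplifies the right-hand side of \eqref{eq:L_kernel} to $(\overline{b_{k,m}}/b_{k,m})\,\mathcal{L}^A_{\Delta_0,r_0}(\tau,z)(x)$.

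Next, fix $x\in\C$ and an arbitrary $\phi\in S^\varepsilon_{k,m}$. Applying \eqref{eq:sigma*} to $\sigma=[P_x,A]$ yields $\langle\phi,\jmath\Sigma^*_{\Delta_0,r_0}([P_x,A])\rangle=\Sigma_{\Delta_0,r_0}(\phi)([P_x,A]^\varepsilon)$, while \eqref{eq:L_and_Sigma} identifies the same right-hand side with $\langle\phi,\mathcal{L}^A_{\Delta_0,r_0}(\cdot)(\overline{x})\rangle$. Non-degeneracy of the Petersson pairing on $S^\varepsilon_{k,m}$ then forces $\jmath\Sigma^*_{\Delta_0,r_0}([P_x,A])=\mathcal{L}^A_{\Delta_0,r_0}(\cdot)(\overline{x})$, and applying the involution $\jmath$ once more gives $\Sigma^*_{\Delta_0,r_0}([P_x,A])=\jmath\bigl(\mathcal{L}^A_{\Delta_0,r_0}(\cdot)(\overline{x})\bigr)$.

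It remains to verify that $\jmath\bigl(\mathcal{L}^A_{\Delta_0,r_0}(\cdot)(\overline{x})\bigr)=(\overline{b_{k,m}}/b_{k,m})\,\mathcal{L}^A_{\Delta_0,r_0}(\cdot)(x)$. This is a Fourier-coefficient computation: the $(\Delta,r)$-th coefficient of $\mathcal{L}^A_{\Delta_0,r_0}(\cdot)(\overline{x})$ is $b_{k,m}\,\mathcal{C}^A_{\Delta_0,r_0}(\Delta,r)(\overline{x})$, and since $\mathcal{C}^A_{\Delta_0,r_0}(\Delta,r)\in\R[X,Y]_{2k-4}$ one has $\overline{\mathcal{C}^A_{\Delta_0,r_0}(\Delta,r)(\overline{x})}=\mathcal{C}^A_{\Delta_0,r_0}(\Delta,r)(x)$. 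Combined with the identity $c_{\jmath\psi}(\Delta,r)=\overline{c_\psi(\Delta,r)}$ recorded in Section~\ref{sect:jmf}, this produces the scalar $\overline{b_{k,m}}/b_{k,m}$ term by term.

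The main obstacle I expect is the bookkeeping of the skew-linear involution $\jmath$ and of the conjugate factor $\overline{b_{k,m}}/b_{k,m}$: the identification of $S_{k,m}$ with its dual via $\phi\mapsto\langle\cdot,\jmath\phi\rangle$ is only $\C$-linear because of the skew-linearity of $\jmath$, and the final scalar arises entirely from this interplay together with the reality of the polynomials $\mathcal{C}^A_{\Delta_0,r_0}(\Delta,r)$. Once that bookkeeping is set up carefully, all remaining steps are essentially formal.
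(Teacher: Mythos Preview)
Your proof is correct and follows essentially the same route as the paper: reduce to $P=P_x$ by linearity, combine \eqref{eq:sigma*} with \eqref{eq:L_and_Sigma} and non-degeneracy of the Petersson pairing to obtain $\Sigma^*_{\Delta_0,r_0}([P_x,A])=\jmath\bigl(\mathcal{L}^A_{\Delta_0,r_0}(\cdot)(\overline{x})\bigr)$, and then use the reality of the coefficient polynomials to identify $\jmath\bigl(\mathcal{L}^A_{\Delta_0,r_0}(\cdot)(\overline{x})\bigr)$ with $(\overline{b_{k,m}}/b_{k,m})\,\mathcal{L}^A_{\Delta_0,r_0}(\cdot)(x)$. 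The paper phrases this last step as the existence of forms $\phi_l$ with real Fourier coefficients such that $\mathcal{L}^A_{\Delta_0,r_0}(\cdot)(x)=b_{k,m}\sum_l\phi_l\,x^l$, which is exactly your statement that $\mathcal{C}^A_{\Delta_0,r_0}(\Delta,r)\in\R[X,Y]_{2k-4}$ rewritten.
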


\begin{proof} 
By \eqref{eq:sigma*} and \eqref{eq:L_and_Sigma} we have
\[
  \<\phi,\mathcal{L}^A_{\Delta_0,r_0}(\cdot)(\overline{x})>
= \Sigma_{\Delta_0,r_0}(\phi)([P_x,A]^\varepsilon)
= \<\phi,\jmath\Sigma^*_{\Delta_0,r_0}([P_x,A])>
\]
for every $\phi\in S^\varepsilon_{k,m}$.
Hence,
$\Sigma^*_{\Delta_0,r_0}([P_x,A])
=\jmath(\mathcal{L}^A_{\Delta_0,r_0}(\cdot)(\overline{x}))$.
By the definition given in \cite[Proposition 4]{S1}, there exist $\phi_l \in 
S^\varepsilon_{k,m}$ with real Fourier coefficients such that 
$\mathcal{L}^A_{\Delta_0,r_0}(\cdot)(x) = b_{k,m} \sum_l \phi_l\, x^l$ for every $x\in 
\C$. This implies that
\begin{equation}\label{eq:jL}
    \jmath\bigl(\mathcal{L}^A_{\Delta_0,r_0}(\cdot)(\overline{x})\bigr)
    = \overline{b_{k,m}} / \!\lower4pt\hbox{$b_{k,m}$}
    \, \mathcal{L}^A_{\Delta_0,r_0}(\cdot)(x)
\end{equation}
for every $x\in \C$. Since $P(x)=[P_x\mid P]$ for any polynomial
$P\in\C[X,Y]_{2k-4}$, we conclude
\[
  \Sigma^*_{\Delta_0,r_0} ([P_x,A])(\tau,z) 
  = \overline{b_{k,m}}/\lower2pt\hbox{$b_{k,m}$} \,
      \mathcal{L}^A_{\Delta_0,r_0}(\tau,z)(x)
  = \overline{b_{k,m}}/\lower2pt\hbox{$b_{k,m}$} \,
      [P_x \vert \mathcal{L}^A_{\Delta_0,r_0}(\tau,z)].
\]
Now \eqref{eq:L_kernel} follows by linearity, since the polynomials
$P_x$ generate $\C[X,Y]_{2k-4}$.
\end{proof}

For a pair of integers $\Delta,r$
we let $\xi_{\Delta,r}:\P^1(\Q)\to \R$ be the map given by
 \[
     \xi_{\Delta,r}(\alpha) =
     \gamma \bigl(
         \zeta_{m,\Delta\Delta_0, rr_0, \alpha,  \Delta_0}(k-1)
         + (-1)^{k-1}\,\varepsilon\,
         \zeta_{m,\Delta\Delta_0, rr_0, -\alpha,  \Delta_0}(k-1)
        \bigr)
     \,,
 \]
where $\zeta_{m,\Delta,r ,\alpha,\Delta_0}(s)$ denotes the Dirichlet series
defined in \cite[p. 67]{S1},
and where $\gamma \in \R$ is as in \cite[Proposition 4]{S1} (with $k$ replaced by 
$k-1$).
By properties of these Dirichlet series, the function $\xi_{\Delta,r}$
satisfies that $\xi_{\Delta,r}(A\alpha) = \xi_{\Delta,r}(\alpha)$
for every $A \in \Gamma_0(m)$.

For $A \in \SL(2,\Z)$, the correction term
$\mathcal{Z}^A_{\Delta_0,r_0}(\Delta,r) \in \R[X,Y]$ appearing in 
\eqref{eq:L_coeffs}
is given by
\begin{align}
   \mathcal{Z}^A_{\Delta_0,r_0}(\Delta,r) & = 
    \xi_{\Delta,r}(A0)\,X^{2k-4} - \xi_{\Delta,r}(A\infty)\,Y^{2k-4}
    \,.
    \nonumber
\end{align}

\begin{lem}\label{lem:Xi}
 For each $\Delta,r$ there is a map
 $\Xi_{\Delta,r} : 
\mathbb{B}_{2k-2}(m) \to \R$ satisfying
 \[
  \Xi_{\Delta,r}(\partial [P,A]) 
  = [P \mid \mathcal{Z}^A_{\Delta_0,r_0}(\Delta,r)]
 \]
 for every $P\in \Z[X,Y]_{2k-4} $ and $A\in \GL(2,\Z)$.
 
\end{lem}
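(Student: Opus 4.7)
The plan is to define $\Xi_{\Delta,r}$ explicitly on generators of $\mathbb{B}_{2k-2}(m)$ and to verify the identity by a short direct calculation.

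I would define, for each $\alpha\in\P^1(\Q)$ and $Q\in\Z[X,Y]_{2k-4}$,
\[
\Xi_{\Delta,r}\bigl((\alpha)\otimes Q\bigr) \;=\; -\xi_{\Delta,r}(\alpha)\,Q(\tilde\alpha),
\]
where $\tilde\alpha=(p,q)\in\Z^2$ is a primitive integer representative of $\alpha=(p:q)$ and $Q(\tilde\alpha)=Q(p,q)$ denotes the homogeneous evaluation; the sign ambiguity in $\tilde\alpha$ is harmless because $w:=2k-4$ is even. I would extend by $\Z$-linearity to $\mathbb{B}_{2k-2}$ and then check that this descends to the $\Gamma_0(m)$-coinvariants: for $B\in\Gamma_0(m)$ one has $B\cdot((\alpha)\otimes Q)=(B\alpha)\otimes(B\cdot Q)$, and since $B$ is unimodular both $\widetilde{B\alpha}=\pm B\tilde\alpha$ is again primitive and $(B\cdot Q)(B\tilde\alpha)=Q(B^{-1}B\tilde\alpha)=Q(\tilde\alpha)$, which together with the given invariance $\xi_{\Delta,r}(B\alpha)=\xi_{\Delta,r}(\alpha)$ shows that $\Xi_{\Delta,r}$ is $\Gamma_0(m)$-invariant.

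To verify the formula, write $A=\smat{a&b\\c&d}\in\GL(2,\Z)$, so that
\[
\partial[P,A]=(A\infty)\otimes(A\cdot P)-(A0)\otimes(A\cdot P),
\]
with primitive representatives $\widetilde{A\infty}=(a,c)$ and $\widetilde{A0}=(b,d)$. A one-line matrix computation using $A^{-1}\smat{a\\c}=\smat{\det A\\0}$ and $A^{-1}\smat{b\\d}=\smat{0\\\det A}$, combined with $w$ even, yields $(A\cdot P)(a,c)=P(\infty)$ and $(A\cdot P)(b,d)=P(0)$, so
\[
\Xi_{\Delta,r}(\partial[P,A]) \;=\; \xi_{\Delta,r}(A0)\,P(0)-\xi_{\Delta,r}(A\infty)\,P(\infty).
\]
On the other hand the pairing satisfies $[P\mid X^{w}]=P(0)$ and $[P\mid Y^{w}]=(-1)^{w}P(\infty)=P(\infty)$ directly from its definition, so $[P\mid\mathcal{Z}^A_{\Delta_0,r_0}(\Delta,r)]=\xi_{\Delta,r}(A0)\,P(0)-\xi_{\Delta,r}(A\infty)\,P(\infty)$ as well, matching the previous expression.

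The only step requiring any care is the well-definedness of $\Xi_{\Delta,r}$ on the $\Gamma_0(m)$-coinvariants, and this amounts to the two observations that a unimodular $B$ sends primitive lifts to primitive lifts and that $w$ is even, so signs drop out; everything else is an unpacking of the definitions.
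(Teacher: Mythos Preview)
Your argument is correct and follows exactly the paper's strategy: define $\Xi_{\Delta,r}$ on generators by $(\alpha)\otimes P\mapsto -\xi_{\Delta,r}(\alpha)$ times an evaluation of $P$ at $\alpha$, check that this descends to the $\Gamma_0(m)$-coinvariants, and then unwind $\partial[P,A]$ directly. The one place you differ from the paper is in the evaluation: the paper uses the inhomogeneous convention $P(\alpha)=P(\alpha,1)$, whereas you evaluate homogeneously at a primitive integer lift $\tilde\alpha$. Your choice is actually the more careful one for the well-definedness step, since under $B=\smat{a&b\\c&d}\in\Gamma_0(m)$ the inhomogeneous evaluation transforms as $(B\cdot P)(B\alpha)=(c\alpha+d)^{-w}P(\alpha)$ rather than $P(\alpha)$, so the paper's rule as literally written does not descend to $\mathbb{B}_{2k-2}(m)$; working with primitive integer lifts and the parity of $w$, as you do, makes the $\Gamma_0(m)$-invariance genuine. (One tiny slip: $A^{-1}\smat{a\\c}=\smat{1\\0}$ and $A^{-1}\smat{b\\d}=\smat{0\\1}$, not $\smat{\det A\\0}$ and $\smat{0\\\det A}$; but since $\det A=\pm 1$ and $w$ is even this does not affect your conclusion.)
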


\begin{proof}
The correspondence 
$(\alpha) \otimes P \mapsto -\xi_{\Delta,r}(\alpha)\,P(\alpha)$
induces a map $\Xi_{\Delta,r} : \mathbb{B}_{2k-2}(m) \to \R$,
since $\xi_{\Delta,r}(A\alpha) = \xi_{\Delta,r}(\alpha)$
for every $A \in \Gamma_0(m)$.

Given $P\in \Z[X,Y]_{2k-4} $ and $A\in \GL(2,\Z)$,
we have
\begin{align*}
\Xi_{\Delta,r}(\partial [P,A])
    & = \Xi_{\Delta,r}(((A \infty) - (A 0))\otimes A\cdot P)
    = \xi_{\Delta,r}(A 0)\,P(0) - \xi_{\Delta,r}(A\infty)\,P(\infty)
    \\ & = \xi_{\Delta,r}(A0)\,[P\mid X^{2k-4}] - \xi_{\Delta,r}(A\infty)\,[P\mid Y^{2k-4}]
 \\ & = [P\mid \mathcal{Z}^A_{\Delta_0,r_0}(\Delta,r)]
 \,,
\end{align*}
where we used that
$[P \mid X^{2k-4}] = P(0)$ and $[P \mid Y^{2k-4}] = P(\infty)$.
\end{proof}

\begin{prop}\label{prop:formula}
Let $P\in\Z[X,Y]_{2k-4}$ and let $A\in \SL(2,\Z)$.
Let $\phi \in S^\varepsilon_{k,m}$ be given by $\phi = 
1/\lower2pt\hbox{$\overline{b_{k,m}}$}\,
\Sigma^*_{\Delta_0,r_0}([P,A])$. Then for every 
$\Delta$ such that $\Delta\Delta_0\neq \square$
we have
\begin{align}\label{eq:coeffs_prop}
c_{\phi}(\Delta,r) = 
\sum_{\substack{Q\in\mathcal{Q}_m(\Delta\Delta_0,r r_0) \\ A^{-1}\cdot Q =
    [a,b,c], \,ac<0 }} \chi_{m,\Delta_0}(Q)\,\sgn(a)\,[A \cdot P\mid Q^{k-2}] + 
\Xi_{\Delta,r}(\partial [P,A]).
\end{align}
\end{prop}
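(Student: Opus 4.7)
The plan is to reduce the formula for $c_\phi(\Delta,r)$ to evaluating the pairing $[P \mid \mathcal{C}^A_{\Delta_0,r_0}(\Delta,r)]$ and then to unpack the three contributions in~\eqref{eq:L_coeffs} term by term. Starting from $\phi = 1/\overline{b_{k,m}}\,\Sigma^*_{\Delta_0,r_0}([P,A])$ and invoking Lemma~\ref{lem:sigma*_vs_kernel}, the factor $\overline{b_{k,m}}$ cancels and we obtain
\[
  \phi(\tau,z) \;=\; \frac{1}{b_{k,m}}\,[P \mid \mathcal{L}^A_{\Delta_0,r_0}(\tau,z)].
\]
Since the pairing is $\C$-bilinear in its second argument and the $(\Delta,r)$-th Fourier coefficient of $\mathcal{L}^A_{\Delta_0,r_0}$ equals $b_{k,m}\,\mathcal{C}^A_{\Delta_0,r_0}(\Delta,r)$, the remaining $b_{k,m}$ also cancels, giving $c_\phi(\Delta,r) = [P \mid \mathcal{C}^A_{\Delta_0,r_0}(\Delta,r)]$.

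Next I would expand $\mathcal{C}^A_{\Delta_0,r_0}(\Delta,r)$ via \eqref{eq:L_coeffs} and distribute the pairing. The main sum yields $\sum_Q \chi_{m,\Delta_0}(Q)\,\sgn(a)\,[P \mid (A^{-1}\cdot Q)^{k-2}]$. The identity $(A^{-1}\cdot Q)^{k-2} = A^{-1}\cdot Q^{k-2}$ is immediate from the definition of the $\GL(2,\C)$-action on polynomials, so Proposition~\ref{prop:poly_pairing}(3) with $\det A=1$ converts each summand into $[A\cdot P \mid Q^{k-2}]$; this reproduces the first sum in~\eqref{eq:coeffs_prop}. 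For the correction term $\mathcal{Z}^A_{\Delta_0,r_0}(\Delta,r)$, Lemma~\ref{lem:Xi} gives $[P \mid \mathcal{Z}^A_{\Delta_0,r_0}(\Delta,r)] = \Xi_{\Delta,r}(\partial[P,A])$, supplying the remaining summand.

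It then remains to verify that the two sums in~\eqref{eq:L_coeffs} involving $F_Q$ and $G_Q$ contribute nothing under the hypothesis $\Delta\Delta_0 \neq \square$. This is where the hypothesis enters: $\SL(2,\Z)$ preserves discriminants, so any $Q \in \mathcal{Q}_m(\Delta\Delta_0,rr_0)$ with $A^{-1}\cdot Q = [0,b,c]$ or $[a,b,0]$ would have discriminant $b^2$, forcing $\Delta\Delta_0 = b^2$ and contradicting our assumption. Hence both indexing sets are empty. The main obstacle in this proof is really bookkeeping: the normalization factors $b_{k,m}$ and $\overline{b_{k,m}}$ must cancel in exactly the right way, and the $\SL(2,\Z)$-equivariance of the pairing must be applied with the correct matrix and exponent; once these are handled the three pieces assemble directly into~\eqref{eq:coeffs_prop}.
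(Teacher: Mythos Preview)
Your proof is correct and follows essentially the same route as the paper: both reduce to $c_\phi(\Delta,r)=[P\mid \mathcal{C}^A_{\Delta_0,r_0}(\Delta,r)]$ via Lemma~\ref{lem:sigma*_vs_kernel}, observe that the $F_Q$ and $G_Q$ sums in~\eqref{eq:L_coeffs} are empty when $\Delta\Delta_0\neq\square$, invoke Lemma~\ref{lem:Xi} for the correction term, and apply Proposition~\ref{prop:poly_pairing}(3) to pass from $[P\mid (A^{-1}\cdot Q)^{k-2}]$ to $[A\cdot P\mid Q^{k-2}]$. Your write-up is in fact slightly more explicit than the paper's about why the degenerate sums vanish, but the argument is the same.
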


\begin{proof}
Since $\Delta\Delta_0$ is not a square, the second and third 
summands of $\mathcal{C}_{\Delta_0, r_0}^A(\Delta,r)(x)$ in \eqref{eq:L_coeffs} 
are empty.
Hence using Lemmas \ref{lem:sigma*_vs_kernel} and \ref{lem:Xi}, and 
\eqref{eq:L_coeffs}, we get that
\begin{align*}
   c_{\phi}(\Delta,r) & = [P \mid \mathcal{C}_{\Delta_0, r_0}^A(\Delta,r)] \\
& =  
\sum_{\substack{Q\in\mathcal{Q}_m(\Delta\Delta_0,r r_0) \\ A^{-1}\cdot Q = 
[a,b,c], \,ac<0 }} \chi_{m,\Delta_0}(Q) \sgn(a)[P\mid (A^{-1}\cdot Q)^{k-2}] 
  + \Xi_{\Delta,r}(\partial[P,A]).
\end{align*}
Since by Proposition \ref{prop:poly_pairing} we have that $[P\mid (A^{-1}\cdot 
Q)^{k-2}] = [A\cdot P\mid Q^{k-2}]$, this completes the proof.
\end{proof}

With these preliminary results and notation in hand we now prove the main
formula, which extends \cite[Theorem 3]{S2} to weights $k\ge 3$.

\begin{thm}\label{thm:formula} Given $\sigma \in \mathbb{M}_{2k-2}(m)$,
let $\phi = -1/\lower2pt\hbox{$\overline{b_{k,m}}$}\,
\Sigma^*_{\Delta_0,r_0}(\sigma)$. Then for every $\Delta$ such that 
$\Delta\Delta_0\neq \square$ we have that 
\begin{equation}\label{eq:formula-k}
c_{\phi}(\Delta,r) = 
\sum_{Q\in\mathcal{Q}_m(\Delta\Delta_0,r r_0)} 
\chi_{m,\Delta_0}(Q) \, C_Q \cdot \sigma 
+ \Xi_{\Delta,r}(\partial\sigma).
\end{equation}
\end{thm}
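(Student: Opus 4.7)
I would first reduce \eqref{eq:formula-k} to the case of a Manin symbol $\sigma=[P,A]$: both sides are linear in $\sigma$, and the Manin symbols span $\mathbb{M}_{2k-2}(m)$ by \cite[Proposition 8.3]{St}. So the theorem will follow by linearity once it is established for $\sigma=[P,A]$.

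For such $\sigma$, Proposition \ref{prop:formula} already computes $c_{\phi_0}(\Delta,r)$ for $\phi_0=\overline{b_{k,m}}^{-1}\Sigma^*_{\Delta_0,r_0}([P,A])$; the hypothesis $\Delta\Delta_0\neq\square$ forces $ac\neq 0$ throughout $\mathcal{Q}_m(\Delta\Delta_0,rr_0)$, so only the first sum in \eqref{eq:L_coeffs} and the correction term $\Xi_{\Delta,r}(\partial[P,A])$ survive (the $F_Q,G_Q$ pieces, which require $ac=0$, are empty because then $\Delta\Delta_0=b^2$). Since the theorem's $\phi$ is chosen as $-\phi_0$ when $\sigma=[P,A]$, it remains to match the surviving sum with $\sum_Q\chi_{m,\Delta_0}(Q)\,C_Q\cdot[P,A]$, keeping careful track of signs and the matching correction term supplied by Lemma \ref{lem:Xi}.

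The key identity I would prove is
\[
C_Q\cdot[P,A]=-\sgn(a)\,[A\cdot P\mid Q^{k-2}]\quad\text{when }A^{-1}\cdot Q=[a,b,c]\text{ with }ac<0,
\]
and $C_Q\cdot[P,A]=0$ when $ac>0$. This follows in three short steps: (i) by the $\GL(2,\Z)$-equivariance of $C_{(\cdot)}$ noted after \eqref{eq:int_number_def}, $C_Q\cdot[P,A]=C_{A^{-1}\cdot Q}\cdot(\{0,\infty\}\otimes P)$; (ii) since $(A^{-1}\cdot Q)(0)=c$ and $(A^{-1}\cdot Q)(\infty)=a$, the definition \eqref{eq:int_number_def} gives $\tfrac12(\sgn c-\sgn a)\,[P\mid(A^{-1}\cdot Q)^{k-2}]$, with prefactor equal to $-\sgn(a)$ when $ac<0$ and $0$ when $ac>0$; (iii) Proposition \ref{prop:poly_pairing}(3) with $\det A=1$ gives $[P\mid(A^{-1}\cdot Q)^{k-2}]=[A\cdot P\mid Q^{k-2}]$.

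Multiplying by $\chi_{m,\Delta_0}(Q)$, summing over $Q\in\mathcal{Q}_m(\Delta\Delta_0,rr_0)$, and substituting into Proposition \ref{prop:formula} turns the main sum into the intersection-number sum of \eqref{eq:formula-k}, while Lemma \ref{lem:Xi} produces the $\Xi_{\Delta,r}(\partial\sigma)$ term directly from $\Xi_{\Delta,r}(\partial[P,A])$. The main obstacle is not conceptual but bookkeeping: the sign flip built into the theorem's $\phi$, the factor $\tfrac12(\sgn c-\sgn a)$ in the definition of $C_Q$, and the minus sign in the correspondence defining $\Xi_{\Delta,r}$ in Lemma \ref{lem:Xi} must all be lined up consistently. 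Beyond Propositions \ref{prop:formula} and \ref{prop:poly_pairing} and Lemma \ref{lem:Xi}, no additional analytic input is needed.
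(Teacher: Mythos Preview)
Your proposal is correct and follows essentially the same route as the paper's proof: reduce to a Manin symbol $\sigma=[P,A]$, compute $C_Q\cdot[P,A]$ in terms of $\sgn(a),\sgn(c)$ for $A^{-1}\cdot Q=[a,b,c]$, and then invoke Proposition~\ref{prop:formula}. The only cosmetic difference is that you reach $[A\cdot P\mid Q^{k-2}]$ via the $\GL(2,\Z)$-equivariance of $C_{(\cdot)}$ together with Proposition~\ref{prop:poly_pairing}(3), whereas the paper evaluates $Q(A0)=c$ and $Q(A\infty)=a$ directly in the definition~\eqref{eq:int_number_def}; these are the same computation organized two ways.
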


\begin{rmk}
 When $\sigma$ is cuspidal we have that $\Xi_{\Delta,r}(\partial\sigma) = 
\Xi_{\Delta,r}(0) = 0$, and hence the right hand side of \eqref{eq:formula-k} 
becomes simpler. In particular, we do not need to compute
the Dirichlet series appearing in the definition of
$\xi_{\Delta,r}(\alpha)$.  For completeness, though, we observe that, for level 
1, these Dirichlet series are partial zeta functions of quadratic number fields, 
and for higher level, when certain congruences in the summation have to be 
observed, they become partial ray class zeta functions.  And so, in principle, 
the required special values could be computed.
\end{rmk}

\begin{rmk}
Each summand in the right hand side of \eqref{eq:formula-k} is well defined 
for $\sigma \in \mathbb{M}_{2k-2}$, but not for $\sigma \in 
\mathbb{M}_{2k-2}(m)$. 
However, since $\Gamma_0(m)$ acts on $\mathcal{Q}_m(\Delta\Delta_0,r r_0)$, by 
the $\Gamma_0(m)$-invariance of both the intersection number and the genus 
character, the right hand side of \eqref{eq:formula-k} is well defined for 
$\sigma \in \mathbb{M}_{2k-2}(m)$.
\end{rmk}

\begin{rmk}
 The proof given is based on results that require the hypothesis $k\geq3$ (for 
example, \cite[Proposition 4]{S1}). When $k=2$, Theorem \ref{thm:formula} is 
valid if we assume furthermore that $\sigma \in \mathbb{S}_{2k-2}(m)$ and that 
$\Delta \neq \square$; this is proved in \cite[Theorem 3]{S2}.
\end{rmk}

\begin{proof}
We assume without loss of generality that $\sigma=[P,A]$.
Let $Q\in\mathcal{Q}_m(\Delta\Delta_0,r r_0)$, and denote $A^{-1}\cdot Q$ by $[a,b,c]$.
Note that $ac\neq 0$, since $\disc(A\cdot Q) = \disc Q \neq \square$.
Since $Q(A0)=c$ and $Q(A\infty)=a$, we have
\begin{align*}
     C_Q\cdot[P,A]
     &= C_Q\cdot(\{A0,A\infty\}\otimes A\cdot P)
     = \frac{\sgn(c)-\sgn(a)}2
       \,[A\cdot P\mid Q^{k-2}].
\end{align*}
We have that $\sgn(c)-\sgn(a)$ is non-zero if and only if $ac < 0$,
and in that case it equals $-2\sgn(a)$.
Hence
\[
 C_Q \cdot [P,A] = 
 \begin{cases}
     -\sgn(a) \,[A\cdot P\mid Q^{k-2}]
         & ac < 0, \\
     0, & \text{otherwise}.
 \end{cases}
\]
Summing over all $Q\in\mathcal{Q}_m(\Delta\Delta_0,r r_0)$
we conclude
\[
\sum_{Q\in\mathcal{Q}_m(\Delta\Delta_0,r r_0)}
\chi_{m,\Delta_0}(Q) \, C_Q \cdot [P,A]
= -
\sum_{\substack{Q\in\mathcal{Q}_m(\Delta\Delta_0,r r_0) \\
        A^{-1}\cdot Q = [a,b,c],\,ac<0}}
\chi_{m,\Delta_0}(Q) \,
\sgn(a) \,[A\cdot P\mid Q^{k-2}]
\]
The result now follows from Proposition~\ref{prop:formula}
\end{proof}

\section{Details on the implementation}\label{sec:implementation}

In Theorem \ref{thm:formula} we stated a formula for computing Fourier 
coefficients of Jacobi forms. In this section we describe the 
practical issues related to carrying out computations using that formula. In 
particular, we rewrite this formula without making explicit mention of 
intersection numbers, we describe why the support for the infinite sums in the 
formula is finite and we identify which quadratic forms we need to consider when 
computing with the formula. We also describe a few auxiliary things we 
implemented. The code is available at 
\cite{jmf_webpage}.

Again for this section, we assume that $(\Delta_0,r_0)$ is a fixed 
\emph{$m$-admissible pair} and that $\sgn(\Delta_0) = \varepsilon$.

\subsection*{Ready-to-compute formulas}  The formulas above are appealing but not so 
useful for computation.  In the following lemma  we give the formula that we 
implement for computing the intersection numbers appearing in 
\eqref{eq:formula-k}.

Given $\sigma \in \mathbb{M}_{2k-2}$, using that $\{\alpha,\beta\} = 
\{\infty,\beta\} - \{\infty,\alpha\}$, we can write
\begin{equation}\label{eq:sigma_decomp}
 \sigma = \sum_i n_i\,\{\infty,s_i\}\otimes P_i \,.
\end{equation}

\begin{lem}\label{lem:inters_skoruppa}
Let $\sigma \in \mathbb{M}_{2k-2}$ be as in \eqref{eq:sigma_decomp},
and let $Q$ be a binary quadratic form with integral coefficients such that 
$\disc Q \neq \square$.  Then 
\begin{equation}\label{eq:inters_skoruppa}
 C_Q \cdot \sigma =
 \sgn Q(\infty)
 \sum_{Q(\infty)\,Q(s_i) < 0}
      n_i\,[P_i \mid Q^{k-2}].
\end{equation}
\end{lem}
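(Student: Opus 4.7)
The plan is to expand $C_Q \cdot \sigma$ directly from the definition \eqref{eq:int_number_def} and then analyze when each summand contributes.

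First, I would decompose $\sigma$ as in \eqref{eq:sigma_decomp} and use bilinearity of the intersection number pairing together with \eqref{eq:int_number_def} to obtain
\[
 C_Q \cdot \sigma = \sum_i n_i \cdot \tfrac{1}{2}\bigl(\sgn Q(\infty) - \sgn Q(s_i)\bigr)\,[P_i \mid Q^{k-2}].
\]

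Next, I would use the hypothesis $\disc Q \neq \square$ to argue that $Q$ has no rational roots: any rational root of~$Q$ forces $\disc Q$ to be a perfect square (in particular, $Q(\infty) = 0$ would mean the leading coefficient of $Q$ vanishes, so $\disc Q$ equals the square of the middle coefficient). Thus $Q(\alpha) \in \R^{\times}$ for every $\alpha \in \P^1(\Q)$, and $\sgn Q(\alpha) \in \{\pm 1\}$ is well defined.

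With signs in $\{\pm 1\}$, the factor $\tfrac{1}{2}(\sgn Q(\infty) - \sgn Q(s_i))$ vanishes when the two signs agree, i.e.\ when $Q(\infty)\,Q(s_i) > 0$, and equals $\sgn Q(\infty)$ when they disagree, i.e.\ when $Q(\infty)\,Q(s_i) < 0$ (checked by inspecting the two sign patterns). Substituting this back into the sum yields \eqref{eq:inters_skoruppa}. There is no real obstacle here; the only thing to be careful about is the role of the non-square discriminant in excluding $\alpha = \infty$ as a degenerate case of $Q$.
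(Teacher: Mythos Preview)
Your proposal is correct and follows essentially the same line as the paper's proof: expand $C_Q\cdot\sigma$ via the definition, use $\disc Q\neq\square$ to ensure $Q(\infty)$ and $Q(s_i)$ are nonzero, and then observe that $\tfrac12(\sgn Q(\infty)-\sgn Q(s_i))$ equals $\sgn Q(\infty)$ exactly when $Q(\infty)\,Q(s_i)<0$ and vanishes otherwise. The only difference is that you spell out slightly more explicitly why no rational value of $Q$ can vanish.
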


\begin{proof}
By definition of the intersection number map $C_Q$ we have that
\[
 C_Q \cdot \sigma
 = \sum_i n_i\,C_Q\cdot(\{\infty,s_i\}\otimes P_i)
 = \sum_i n_i\,\frac{\sgn Q(\infty)-\sgn Q(s_i)}2\,[P_i \mid Q^{k-2}]
 \,.
\]
Since $\disc Q \neq \square$, we have $Q(\infty)\,Q(s_i)\neq 0$, hence
$\frac{\sgn Q(\infty)-\sgn Q(s_i)}2$ is non-zero if and only if 
$Q(\infty)\,Q(s_i)<0$, and in that case it equals $\sgn Q(\infty)$.
\end{proof}

\begin{prop}\label{prop:formulas_implemented} Given $\sigma \in 
\mathbb{S}_{2k-2}(m)$ as in \eqref{eq:sigma_decomp}, 
let $\phi = -1/\lower2pt\hbox{$\overline{b_{k,m}}$}\,
\Sigma^*_{\Delta_0,r_0}(\sigma)$. Then for every $\Delta$ such that 
$\Delta\Delta_0\neq \square$ we have that $c_{\phi}(\Delta,r) = 
\widetilde{c}(\Delta,r) + (-1)^{k-1}\,\varepsilon \,
\widetilde{c}(\Delta,-r)$, where
\begin{equation}\label{eq:formula_implemented}
\widetilde{c}(\Delta,r) = \sum_i n_i
\sum_{\substack{Q\in\mathcal{Q}_m(\Delta\Delta_0,r 
r_0) \\ Q(\infty)>0, \, Q(s_i)<0}} \chi_{m,\Delta_0}(Q) \, [P_i 
\mid Q^{k-2}].
\end{equation}
\end{prop}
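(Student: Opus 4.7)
The plan is to apply Theorem~\ref{thm:formula} to $\sigma$ and then massage the resulting sum using the decomposition~\eqref{eq:sigma_decomp} together with Lemma~\ref{lem:inters_skoruppa}. Since $\sigma \in \mathbb{S}_{2k-2}(m) = \ker\partial$, the correction term $\Xi_{\Delta,r}(\partial\sigma)$ vanishes, leaving
\[
c_{\phi}(\Delta,r) = \sum_{Q\in\mathcal{Q}_m(\Delta\Delta_0,rr_0)} \chi_{m,\Delta_0}(Q)\,C_Q \cdot \sigma.
\]
The hypothesis $\Delta\Delta_0 \neq \square$ guarantees that $Q(\infty) \neq 0$ and that $Q(s_i) \neq 0$ for every $Q$ in the indexing set and every rational $s_i$ appearing in~\eqref{eq:sigma_decomp} (a rational zero of $Q$ would force $\disc Q$ to be a square). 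Hence Lemma~\ref{lem:inters_skoruppa} applies term by term and gives
\[
c_\phi(\Delta,r) = \sum_i n_i \sum_{\substack{Q \in \mathcal{Q}_m(\Delta\Delta_0,rr_0) \\ Q(\infty)\,Q(s_i)<0}} \sgn Q(\infty)\,\chi_{m,\Delta_0}(Q)\,[P_i \mid Q^{k-2}].
\]

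Next I would split this double sum according to $\sgn Q(\infty) = \pm 1$. The summands with $Q(\infty) > 0$ and $Q(s_i) < 0$ assemble directly into $\widetilde{c}(\Delta,r)$ by definition. For the remaining summands, where $Q(\infty) < 0$ and $Q(s_i) > 0$, the key move is the involution $Q \mapsto -Q$, which sends $\mathcal{Q}_m(\Delta\Delta_0,rr_0)$ bijectively onto $\mathcal{Q}_m(\Delta\Delta_0,-rr_0)$, converts the sign conditions into $(-Q)(\infty) > 0$ and $(-Q)(s_i) < 0$, and scales the pairing by $[P_i \mid (-Q)^{k-2}] = (-1)^{k-2}[P_i \mid Q^{k-2}]$.

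The crucial input is the behavior of the genus character under negation: the claim is $\chi_{m,\Delta_0}(-Q) = \varepsilon\,\chi_{m,\Delta_0}(Q)$, which I would verify directly from the description of $\chi_{m,\Delta_0}$ in \cite[Proposition 1]{GKZ} as $\kro{\Delta_0}{n}$ for a suitable integer $n$ represented by $Q$ and coprime to $\Delta_0$, together with the elementary identity $\kro{\Delta_0}{-1} = \sgn\Delta_0 = \varepsilon$ for a fundamental discriminant. Granted this identity, the $Q(\infty) < 0$ portion of the sum picks up an overall factor $(-1)\cdot (-1)^{k-2}\cdot \varepsilon = (-1)^{k-1}\varepsilon$, coming respectively from $\sgn Q(\infty) = -1$, the pairing rescaling, and the character; after reindexing it equals $(-1)^{k-1}\varepsilon\,\widetilde{c}(\Delta,-r)$, yielding the claimed decomposition. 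Verifying the character identity is the main technical point; once it is in hand, the rest is pure sign bookkeeping around the $Q \mapsto -Q$ involution.
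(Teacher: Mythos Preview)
Your proposal is correct and matches the paper's own proof essentially line for line: the paper likewise combines Theorem~\ref{thm:formula} with Lemma~\ref{lem:inters_skoruppa}, splits the inner sum, and then invokes the bijection $Q\mapsto -Q$ between the two supports. The only difference is cosmetic: the paper states the equality of the two halves and says it ``follows by considering the bijection between the supports given by $Q\mapsto -Q$'' without writing out the sign bookkeeping or the character identity $\chi_{m,\Delta_0}(-Q)=\varepsilon\,\chi_{m,\Delta_0}(Q)$, whereas you make both explicit.
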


\begin{proof}
 Combining Lemma \ref{lem:inters_skoruppa} and Theorem \ref{thm:formula} we get 
that
\[
 c_\phi(\Delta,r) = \sum_{i} 
n_i \sum_{\substack{Q\in\mathcal{Q}_m(\Delta\Delta_0,r 
r_0) \\ Q(\infty)\,Q(s_i)<0}}\sgn Q(\infty) \chi_{m,\Delta_0}(Q) \, [P_i 
\mid Q^{k-2}].
\]
Splitting the inner sum above, it is enough to show that
\[
  \sum_{\substack{Q\in\mathcal{Q}_m(\Delta\Delta_0,r 
r_0) \\ Q(\infty)<0, \, Q(s_i)>0}} - \chi_{m,\Delta_0}(Q) \, [P_i 
\mid Q^{k-2}]
= (-1)^{k-1} \,\varepsilon \, \sum_{\substack{Q\in\mathcal{Q}_m(\Delta\Delta_0,-r 
r_0) \\ Q(\infty)>0, \, Q(s_i)<0}} \chi_{m,\Delta_0}(Q) \, [P_i 
\mid Q^{k-2}].
\]
This follows by considering the bijection between the supports given by $Q 
\mapsto -Q$.
\end{proof}

\subsection*{Quadratic forms in the support}

The sums in \eqref{eq:formula_implemented} are indexed by indefinite quadratic 
forms in $\mathcal{Q}_m(\Delta\Delta_0,r r_0)$  with $Q(\infty)>0$ and $Q(s)<0$. 
The following lemma shows that these sets are finite, and also gives explicit 
bounds for the coefficients of the quadratic forms that we need to consider.

\begin{lem}
Let $D_{\max} > 0$. Let $Q=[a,b,c]$ be a quadratic form with $a > 0$. Assume 
that $0 < \disc Q \leq D_{\max}$. 
Further, let $s=\frac{p}{q}\in\Q$ and $Q(s)<0$. Then
\begin{align*}
a &\leq \frac{D_{\max} q^2}{4},\\ 
\left \lfloor -2as - \sqrt{D_{\max}} \right \rfloor < b & < \left \lceil - 
2as + \sqrt{D_{\max}} \right \rceil, \\
\left \lceil \frac{b^2-D_{\max}}{4a} \right \rceil  \leq c & < \left \lceil
\frac{b^2 - D_0}{4a}\right \rceil,
\end{align*}
where $D_0 =(b+2as)^2$.
\end{lem}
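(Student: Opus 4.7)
The entire lemma follows from one elementary identity plus an integrality observation, so I would open by establishing both and then reading off the three bounds separately.

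The identity is
\[
\disc Q \;=\; b^2 - 4ac \;=\; (b+2as)^2 - 4a\,Q(s),
\]
obtained by expanding $(b+2as)^2 - 4a(as^2+bs+c)$. Writing $s = p/q$ with $p,q\in\Z$, note that $q^2 Q(s) = ap^2 + bpq + cq^2$ is an integer; since $Q(s)<0$ this gives $q^2 Q(s)\le -1$, equivalently
\[
-4a\,Q(s) \;\ge\; \frac{4a}{q^2}\,.
\]

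For the bound on $a$, the identity combined with the inequality above yields $\disc Q \ge (b+2as)^2 + 4a/q^2 \ge 4a/q^2$, so $4a/q^2 \le D_{\max}$, i.e.\ $a \le D_{\max}q^2/4$. For the bound on $b$, the identity gives $(b+2as)^2 = \disc Q + 4a\,Q(s) < \disc Q \le D_{\max}$, hence $|b+2as| < \sqrt{D_{\max}}$, i.e.\ $-2as-\sqrt{D_{\max}} < b < -2as+\sqrt{D_{\max}}$; since $b\in\Z$ these strict real inequalities are equivalent to the claimed strict inequalities against $\lfloor\cdot\rfloor$ and $\lceil\cdot\rceil$ (case-splitting on whether $\pm 2as\pm\sqrt{D_{\max}}$ happens to be an integer takes one line each).

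For the bounds on $c$, I split the two sides. The lower bound is just $\disc Q \le D_{\max}$ rewritten: $b^2 - 4ac \le D_{\max}$ gives $c \ge (b^2-D_{\max})/(4a)$, and $c\in\Z$ upgrades this to $c \ge \lceil (b^2-D_{\max})/(4a)\rceil$. For the upper bound I use the identity a second time, now in the form $\disc Q = D_0 - 4a\,Q(s)$, which combined with $\disc Q = b^2 - 4ac$ rearranges to
\[
c \;=\; Q(s) + \frac{b^2 - D_0}{4a}\,.
\]
Since $Q(s)<0$ this gives $c < (b^2-D_0)/(4a)$ strictly, and again $c\in\Z$ upgrades to $c < \lceil(b^2-D_0)/(4a)\rceil$.

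No step is a genuine obstacle; the only mildly delicate point is the floor/ceiling bookkeeping when turning strict real inequalities into strict integer ones, and that is handled uniformly by noting that for $b \in \Z$ we have $b > r \iff b > \lfloor r\rfloor$ and $b < r \iff b < \lceil r\rceil$ (both without any need to treat the integer case of $r$ separately, since in that case $\lfloor r\rfloor = r = \lceil r\rceil$ already excludes $b = r$).
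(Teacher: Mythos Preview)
Your proof is correct and follows essentially the same approach as the paper. Both arguments hinge on the integrality of $q^2 Q(s)\le -1$ and the completing-the-square identity (the paper writes it as $q^2 Q(s) = a(p+bq/2a)^2 - Dq^2/(4a)$, which after clearing denominators is exactly your $\disc Q = (b+2as)^2 - 4a\,Q(s)$); your version is slightly more streamlined in deriving all three bounds from this single identity rather than invoking the root description of $\{Q<0\}$ separately for the $b$-bound.
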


\begin{proof}
By hypothesis, $q^2 Q(s)$ is a negative integer. Furthermore,  $q^2 Q(s) =  a 
(p + \frac{bq}{2a})^2 - \frac{D q^2}{4a} \geq - \frac{D q^2}{4a}$, where $D = 
\disc Q$. In particular $-1 \geq -\frac{Dq^2}{4a}$, which proves the first 
inequality. The inequalities involving $b$ follow from the fact that 
$\frac{-b-\sqrt{D}}{2a} < s < \frac{-b+\sqrt{D}}{2a}$. The lower bound on $c$ 
follows easily from the bound on $D$. The upper bound on $c$ is equivalent to 
the inequality $D_0 < D$. The latter follows from the fact that
\[
D_0 = D + 4a (a s^2 + bs + c) = D + 4a Q(s).
\]
\end{proof}

\subsection*{Every coefficient can be computed}

As mentioned before, in order to compute the coefficient $c(\Delta,r)$, the 
formulas given by \eqref{eq:formula_implemented}, require that 
$\Delta\Delta_0\neq \square$.  A reasonable question to ask, then, is whether or 
not it is possible to compute every coefficient of a given Jacobi form.  The 
following lemma answers the question in the affirmative.

\begin{lem}
Given an $m$-admissible pair $(\Delta,r)$ there exists an $m$-admissible pair  
$(\Delta_1,r_1)$ with $\Delta_1$ a negative fundamental discriminant such that 
$\Delta \Delta_1 \neq \square$.
\end{lem}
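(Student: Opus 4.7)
The plan is to split on the sign of $\Delta$ and, in each case, reduce to exhibiting a single negative fundamental discriminant in a prescribed residue class modulo $4m$.

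First, I would observe that if $\Delta>0$, then $\Delta\Delta_1<0$ for every negative $\Delta_1$, so the condition $\Delta\Delta_1\neq\square$ is automatic. If $\Delta<0$, then $\Delta\Delta_1>0$; and since both $\Delta$ and $\Delta_1$ are fundamental discriminants (hence determined by their sign together with their squarefree part), $\Delta\Delta_1$ is a perfect square precisely when $\Delta_1=\Delta$. In either case, it suffices to exhibit a negative fundamental discriminant $\Delta_1\neq\Delta$ together with an integer $r_1$ satisfying $\Delta_1\equiv r_1^2\pmod{4m}$.

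To produce $\Delta_1$, I would invoke Dirichlet's theorem on primes in arithmetic progressions. Since $\gcd(-1,4m)=1$, there are infinitely many primes $p$ with $p\equiv -1\pmod{4m}$. For any such $p$, set $\Delta_1=-p$. Then $\Delta_1\equiv 1\pmod 4$ and $\Delta_1$ is squarefree (as $-p$ has only $p$ as a prime factor), so $\Delta_1$ is a negative fundamental discriminant; moreover $\Delta_1\equiv 1\equiv 1^2\pmod{4m}$, so $(\Delta_1,1)$ is $m$-admissible. Choosing $p$ large enough to ensure $-p\neq\Delta$ (which excludes at most one value of $p$) completes the construction.

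No step is really an obstacle; the only subtlety is the reduction in the $\Delta<0$ case, which relies on the fact that two distinct negative fundamental discriminants always have non-square product, an immediate consequence of the uniqueness of the squarefree part of an integer. An alternative, more elementary route would extract squarefree values from the arithmetic progression $r^2-4mn\pmod{4m}$ for $n\to\infty$, but appealing to Dirichlet is the cleanest presentation.
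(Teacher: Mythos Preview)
Your proof is correct. It differs from the paper's in its overall strategy: the paper first posits an $m$-admissible pair $(\Delta_0,r_0)$ with $\Delta_0$ a negative odd fundamental discriminant, and then, in the bad case $\Delta\Delta_0=\square$, replaces $\Delta_0$ by $p\Delta_0$ for a prime $p$ in a suitable residue class, so that $p\Delta_0\Delta$ is a prime times a square and hence not a square. You instead split on the sign of $\Delta$ and, in the case $\Delta<0$, invoke the fact that two distinct negative fundamental discriminants can never have square product; this reduces everything to constructing \emph{one} negative fundamental $\Delta_1\equiv 1\pmod{4m}$ different from $\Delta$, which you do directly via Dirichlet. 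Your route is arguably cleaner in that it never assumes the existence of an initial negative odd fundamental $m$-admissible pair (indeed, your Dirichlet step supplies one), while the paper's route has the advantage of not needing the auxiliary observation about products of fundamental discriminants and of working verbatim even if $\Delta$ were allowed to be a non-fundamental discriminant.
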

\begin{proof}Let $(\Delta_0,r_0)$ be any $m$-admissible pair with $\Delta_0$ a 
negative, odd fundamental discriminant. If $\Delta \Delta_0 \neq \square$, we 
are done. Otherwise, let $p$ be a prime such that $p \equiv 1 \mod \gcd(4m, 
\Delta_0, \Delta)$. Let $\Delta_1 = p \Delta_0$ and $r_1 = r_0$. Then $\Delta_1$ 
is square-free and $\Delta_1 \equiv 1\mod 4$, whence it is a negative 
fundamental discriminant.  Furthermore, $\Delta_1 \equiv \Delta_0 \equiv r_1^2 
\mod 4m$, and $\Delta_1 \Delta = p \Delta_0 \Delta \neq \square$. Hence 
$(\Delta_1,r_1)$ satisfies the required conditions.
\end{proof}

\subsection*{Choice of $m$-admissible pair}  

The starting point for the formula in Proposition~\ref{prop:formulas_implemented} is a choice of $m$-admissible pair $(\Delta_0,r_0)$.  We find $\Delta_0$ among the negative fundamental discriminants that are squares modulo $4m$ if the form we are computing is holomorphic and among the positive fundamental discriminants that are squares modulo $4m$ if the form is skew-holomorphic.  Once a $\Delta_0$ is chosen, we compute all the square roots of $\Delta_0$ modulo $4m$ that are less than $2m$.  In practice we choose the smallest $\Delta_0$ we can in order to keep the support of the sums in \eqref{eq:formula_implemented} as small as possible.

\subsection*{Genus character}  The last part of formula 
\eqref{eq:formula_implemented} we have not described is the genus character.  We 
remark that the implementation of the formula for the genus character 
$\chi_{m,\Delta_0}(Q)$ as described in \cite[Proposition 1]{GKZ} is 
straightforward.

\subsection*{Effectiveness of the implementation}

In this section we make some brief comments about range of Jacobi forms we have 
computed using our implementation of the formulas proved above and an idea of 
the time involved to carry out some of the computations.  Timings are summarized 
in Table~\ref{tbl:timings}.  The computations are done using Sage 6.7 
\cite{Sage} and the Cython code posted at \cite{jmf_webpage}. The code was run 
on an Intel 2.7 GHz processor running RHEL 7.0.0.

In order to illustrate the effectiveness of our method, we fixed the weight at 
2 and computed expansions of forms in various indices.  We also fixed the index 
at 1 and computed expansions of forms of various weights.  As 
Table~\ref{tbl:timings} shows, were able to compute fairly quickly in weight 2 
and fairly high index and in index 1 and fairly high weight.

\begin{table}
\begin{tabular}{lll}
&& Sample time to compute $c(\Delta,r)$ for $\abs{\Delta}<\Delta_{\text{max}}$ \\
Space & $\Delta_{\text{max}}$ &  for an element of the space\\\hline
$S_{2,37}^-$ & $10000$ & $38.5$ s\\
$S_{2,11}^+$ & $10000$ & $32.7$ s\\
$S_{2,15}^+$ & $1000$ & $9.51$ s\\
$S_{2,389}^+$ & $10000$ & $37.2$ s\\
$S_{2,5077}^+$ & $100$ & $745$ s\\
$S_{10,1}^-$ & $1000$ & $87.4$ s\\
$S_{40,1}^-$ & $100$ & $7.51$ s\\
$S_{50,1}^-$ & $100$ & $9.1$ s\\
$S_{100,1}^-$ & $100$ & $20.8$ s   
\end{tabular}

\caption{Timings for the computation of particular examples of Jacobi forms, not 
including the time to compute the modular symbol.  We do not identify which 
particular form in each space we compute, but instead aim to illustrate how the 
timing depends on the weight and the index.  We do point of that if a form in 
the Appendix is an element of a space in the above table, the timing reported is 
for that form.}\label{tbl:timings}

\end{table}

\oneappendix
\section{Tables of coefficients}

In this article we have given formulas for the Fourier expansion of Jacobi 
forms.  Consider a Jacobi form $\phi$ of weight $k$ and index 
$m$.  Then $\phi$ has a Fourier expansion of the form
\begin{equation*}
\mbox{}\qquad\qquad
\phi(\tau,z) = \sum_{\substack{\Delta,r\in\Z,\;\varepsilon\Delta \geq 0 
\\ \Delta\equiv r^2\bmod{4m}}} c_\phi(\Delta,r)\,e^{2\pi i \left(
\tfrac{r^2-\Delta}{4m} u + \tfrac{r^2+\abs{\Delta}}{4m}iv+rz\right)}
\qquad(\tau=u+iv),
\end{equation*}
where $\varepsilon=-1$ if $\phi$ is holomorphic and $\varepsilon=1$ if $\phi$ 
is skew-holomorphic. The way our formula \eqref{eq:formula_implemented} works 
is that it produces the coefficients $c_\phi(\Delta,r)$.
We point out that the coefficients $c_\phi(\Delta,r)$ 
depend only on $\Delta$ and $r\pmod{2m}$; if $k$ is even and $m=1$ or 
prime, they only depend on $\Delta$. Furthermore, they satisfy that
$c_\phi(\Delta,-r)=(-1)^{k-1}\,\varepsilon\,c_\phi(\Delta,r)$.

\begin{table}[t]
\begin{tabular}{rrrr}
$\Delta$ & $r$ & $c_{-4,12}(\Delta,r)$ &  $c_{-3,21}(\Delta,r)$\\\hline
$ -3 $  &  $ 21 $  &  $ 1 $ & NA\\
$ -4 $ & $12$ & NA & 1\\
$ -7 $  &  $ 17 $  &  $ -1 $ & $-1$ \\
$ -11 $  &  $ 27 $  &  $ 1 $ & $1$ \\
$ -12 $  &  $ 32 $  &  $ -1 $ & NA\\
$-16$ & $24$ & NA & $-2$\\
$ -27 $  &  $ 11 $  &  $ -3 $ & NA \\
$ -28 $  &  $ 34 $  &  $ 3 $ & $3$\\
$-36$ & $36$ & NA & $-2$\\
$ -40 $  &  $ 16 $  &  $ 2 $ & $2$\\
$ -44 $  &  $ 20 $  &  $ -1 $ & $-1$ \\
$ -47 $  &  $ 29 $  &  $ -1 $ & $-1$ \\
$ -48 $  &  $ 10 $  &  $ 0 $ & NA\\
\end{tabular}
\caption{These are the coefficients $c(\Delta,r)$ of the holomorphic Jacobi 
cuspform in $S^-_{2,37}$ corresponding to the modular symbol
$\{\infty,-1/23\} - \{\infty,-1/32\} + \{\infty,-1/34\}- 
\{\infty,0\}\in\mathbb{S}^-_2(37)$. The data in the third column is 
the result of using \eqref{eq:formula_implemented} with $\Delta_0=-4$ and 
$r=12$.  The data in the fourth column is the result of using 
\eqref{eq:formula_implemented} with $\Delta_0=-3$ and $r=21$.  An NA means that 
$\Delta \Delta_0=\square$ and our formula does not apply.  The values of the 
coefficients agree with those in 
\protect\cite{EZ}.}
\label{tbl:comparison}
\end{table}

\begin{table}[t]
\begin{tabular}{rrr}
$\Delta$ & $r$ & $c(\Delta,r)$ \\\hline
$ 1 $  &  $ 1 $  &  $ 1 $ \\
$ 4 $  &  $ 2 $  &  $ -3 $ \\
$ 5 $  &  $ 7 $  &  $ 5 $ \\
$ 9 $  &  $ 3 $  &  $ -2 $ \\
$ 12 $  &  $ 10 $  &  $ 5 $ \\
$ 16 $  &  $ 4 $  &  $ 4 $ \\
$ 20 $  &  $ 8 $  &  $ 5 $ \\
$ 25 $  &  $ 5 $  &  $ 0 $ \\
$ 33 $  &  $ 11 $  &  $ 0 $ \\
$ 36 $  &  $ 6 $  &  $ 6 $ \\
$ 37 $  &  $ 9 $  &  $ 5 $ \\
$ 44 $  &  $ 0 $  &  $ 0 $ \\
$ 45 $  &  $ 1 $  &  $ 0 $ \\
$ 48 $  &  $ 2 $  &  $ 10 $ \\
$ 49 $  &  $ 7 $  &  $ -3 $ \\
\end{tabular}
\caption{These are the coefficients $c(\Delta,r)$ of the skew-holomorphic Jacobi 
cuspform in $S^+_{2,11}$ corresponding to the modular symbol 
$\{\infty,-1/9\} - 2\{\infty,-1/8\} + \{\infty,0\} 
\in\mathbb{S}^+_2(11)$.}\label{tbl:skew}
\end{table}

\begin{table}[t]
\begin{tabular}{rrr}
$\Delta$ & $r$ & $c(\Delta,r)$ \\\hline
$ -3 $  &  $ 1 $  &  $ -1 $ \\
$ -4 $  &  $ 0 $  &  $ 2 $ \\
$ -7 $  &  $ 1 $  &  $ 16 $ \\
$ -8 $  &  $ 0 $  &  $ -36 $ \\
$ -11 $  &  $ 1 $  &  $ -99 $ \\
$ -12 $  &  $ 0 $  &  $ 272 $ \\
$ -15 $  &  $ 1 $  &  $ 240 $ \\
$ -16 $  &  $ 0 $  &  $ -1056 $ \\
$ -19 $  &  $ 1 $  &  $ 253 $ \\
$ -20 $  &  $ 0 $  &  $ 1800 $ \\
$ -23 $  &  $ 1 $  &  $ -2736 $ \\
$ -24 $  &  $ 0 $  &  $ 1464 $ \\
$ -27 $  &  $ 1 $  &  $ 4284 $ \\
$ -28 $  &  $ 0 $  &  $ -12544 $ \\
$ -31 $  &  $ 1 $  &  $ 6816 $ \\
$ -32 $  &  $ 0 $  &  $ 19008 $ \\
$ -35 $  &  $ 1 $  &  $ -27270 $ \\
$ -36 $  &  $ 0 $  &  $ 4554 $ \\
$ -39 $  &  $ 1 $  &  $ 6864 $ \\
$ -40 $  &  $ 0 $  &  $ -39880 $ \\
$ -43 $  &  $ 1 $  &  $ 66013 $ \\
$ -44 $  &  $ 0 $  &  $ 26928 $ \\
$ -47 $  &  $ 1 $  &  $ -44064 $ \\
$ -48 $  &  $ 0 $  &  $ -12544 $ \\
\end{tabular}
\caption{These are the coefficients $c(\Delta,r)$ of the holomorphic Jacobi 
cuspform in $S^-_{10,1}$ corresponding to the modular symbol 
$\{\infty,0\}\otimes X^{14}Y^2 \in\mathbb{S}^-_{18}(1)$.  These coefficients 
agree with the first coefficient of the Fourier--Jacobi expansion of the Siegel 
modular cuspform of degree 2, weight 10 and level 1.}\label{tbl:weight}
\end{table}

\begin{table}[t]
\begin{tabular}{rrr}
$\Delta$ & $r$ & $c(\Delta,r)$ \\\hline
$ 1 $  &  $ 1 $  &  $ 1 $ \\
$ 1 $  &  $ 11 $  &  $ 1 $ \\
$ 4 $  &  $ 2 $  &  $ -2 $ \\
$ 4 $  &  $ 8 $  &  $ 2 $ \\
$ 9 $  &  $ 3 $  &  $ -2 $ \\
$ 16 $  &  $ 4 $  &  $ 0 $ \\
$ 16 $  &  $ 14 $  &  $ 0 $ \\
$ 21 $  &  $ 9 $  &  $ 8 $ \\
$ 24 $  &  $ 12 $  &  $ 8 $ \\
$ 25 $  &  $ 5 $  &  $ 0 $ \\
$ 36 $  &  $ 6 $  &  $ 4 $ \\
$ 40 $  &  $ 10 $  &  $ 0 $ \\
$ 45 $  &  $ 15 $  &  $ 0 $ \\
$ 49 $  &  $ 7 $  &  $ -1 $ \\
$ 49 $  &  $ 13 $  &  $ 1 $ \\
\end{tabular}
\caption{These are the coefficients $c(\Delta,r)$ of the skew-holomorphic Jacobi 
cuspform in $S^+_{2,15}$ corresponding to the modular symbol $\{\infty,1/5\} + 
\{\infty, -1/2\} - \{\infty,-2/5\} - \{\infty,0\} \in\mathbb{S}^+_2(15)$.  
The values of the coefficients are consistent with those in \protect\cite{PT}.}
\label{tbl:composite}
\end{table}

\begin{table}[t]
\begin{tabular}{rrr}
$\Delta$ & $r$ & $c(\Delta,r)$ \\\hline
$ 1 $  &  $ 1 $  &  $ 0 $ \\
    $ 4 $  &  $ 2 $  &  $ 0 $ \\
$ 5 $  &  $ 303 $  &  $ 1 $ \\
    $ 9 $  &  $ 3 $  &  $ 0 $ \\
$ 13 $  &  $ 205 $  &  $ -1 $ \\
    $ 16 $  &  $ 4 $  &  $ 0 $ \\
$ 17 $  &  $ 79 $  &  $ -1 $ \\
    $ 20 $  &  $ 172 $  &  $ 1 $ \\
$ 24 $  &  $ 56 $  &  $ 1 $ \\
    $ 25 $  &  $ 5 $  &  $ 0 $ \\
$ 28 $  &  $ 240 $  &  $ -1 $ \\
    $ 36 $  &  $ 6 $  &  $ 0 $ \\
$ 41 $  &  $ 279 $  &  $ -1 $ \\
$ 44 $  &  $ 40 $  &  $ -1 $ \\
    $ 45 $  &  $ 131 $  &  $ -1 $ \\
    $ 49 $  &  $ 7 $  &  $ 0 $ \\
    $ 52 $  &  $ 368 $  &  $ -1 $ \\
    $ 64 $  &  $ 8 $  &  $ 0 $ \\
$ 65 $  &  $ 125 $  &  $ 0 $ \\
    $ 68 $  &  $ 158 $  &  $ 3 $ \\
$ 69 $  &  $ 153 $  &  $ -1 $ \\
$ 73 $  &  $ 97 $  &  $ -1 $ \\
$ 76 $  &  $ 290 $  &  $ 1 $ \\
$ 77 $  &  $ 323 $  &  $ 1 $ \\
    $ 80 $  &  $ 344 $  &  $ 0 $ \\
    $ 81 $  &  $ 9 $  &  $ 0 $ \\
$ 85 $  &  $ 181 $  &  $ -1 $ \\
$ 93 $  &  $ 69 $  &  $ 0 $ \\
    $ 96 $  &  $ 112 $  &  $ -2 $ \\
$ 97 $  &  $ 137 $  &  $ 1 $ \\
\end{tabular}
\caption{These are the coefficients $c(\Delta,r)$ of the skew-holomorphic Jacobi 
cuspform in $S^+_{2,389}$ corresponding to the unique rational newform in $S^+_2(389)$,
which in turn corresponds to the elliptic curve $E$ with Cremona label \texttt{389a1}.
Note that for a fundamental discriminant $\Delta$,
 $c(\Delta,r)$ vanishes if and only if the twist $E_\Delta$ has positive
rank, in accordance with the Birch and Swinnerton-Dyer Conjecture.}
\end{table}

\newcommand{\etalchar}[1]{$^{#1}$}

\affiliationone{Nathan C. Ryan\\
Bucknell University\\
Lewisburg, PA USA}
\affiliationtwo{Nicol\'as Sirolli\\
Universidad de la Rep\'ublica,\\
Montevideo, Uruguay\\
Current affiliation:\\
Universidad de Buenos Aires\\
Buenos Aires, Argentina}
\affiliationthree{Nils-Peter Skoruppa\\
Universit\"at Siegen\\
Siegen, Germany}
\affiliationfour{Gonzalo Tornar\'ia\\
Universidad de la Rep\'ublica,\\
Montevideo, Uruguay}

\end{document}